\newtheorem{thm}{Theorem}[section]
\newtheorem{lmm}[thm]{Lemma}
\theoremstyle{definition}
\newtheorem{prop}[thm]{Property}
\newtheorem{rem}{Remark}[section]
\begin{document}

\title{How to Design A Generic Accuracy-Enhancing Filter for Discontinuous Galerkin Methods}%

\author{Xiaozhou Li\footnotemark[1]~\footnotemark[2]}

\renewcommand{\thefootnote}{\fnsymbol{footnote}}
\footnotetext[1]{School of Mathematical Sciences, University of Electronic Science and Technology of China, Chengdu, China. Email: xiaozhouli@uestc.edu.cn. Research supported by NSFC grants 11801062.}
\footnotetext[2]{Corresponding author.}


\date{}

\maketitle

\begin{abstract}
Higher-order accuracy (order of $k+1$ in the $L^2$ norm) is one of the well known beneficial properties of the discontinuous Galerkin (DG) method.  Furthermore, many studies have demonstrated the superconvergence property (order of $2k+1$ in the negative norm) of the semi-discrete DG method.  One can take advantage of this superconvergence property by post-processing techniques to enhance the accuracy of the DG solution.  A popular class of post-processing techniques to raise the convergence rate from order $k+1$ to order $2k+1$ in the $L^2$ norm is the Smoothness-Increasing Accuracy-Conserving (SIAC) filtering.  In addition to enhancing the accuracy, the SIAC filtering also increases the inter-element smoothness of the DG solution.  The SIAC filtering was introduced for the DG method of the linear hyperbolic equation by Cockburn et al. in 2003.  Since then,  there are many generalizations of the SIAC filtering have been proposed.  However, the development of SIAC filtering has never gone beyond the framework of using spline functions (mostly B-splines) to construct the filter function.  In this paper, we first investigate the general basis function (beyond the spline functions) that can be used to construct the SIAC filter.  The studies of the general basis function relax the SIAC filter structure and provide more specific properties, such as extra smoothness, etc.  Secondly, we study the basis functions' distribution and propose a new SIAC filter called compact SIAC filter that significantly reduces the original SIAC filter's support size while preserving (or even improving) its ability to enhance the accuracy of the DG solution.  We show that the proofs of the new SIAC filters' ability to extract the superconvergence and provide numerical results to confirm the theoretical results and demonstrate the new finding's good numerical performance.          

\smallskip
\noindent \textbf{Keywords.} discontinuous Galerkin method, superconvergence, Smoothness-Increasing Accuracy-Conserving (SIAC) filtering, convolution filtering, post-processing
\end{abstract}

\section{Introduction}
In the past decades, the DG method has become a popular class of numerical methods for solving partial differential equations, such as hyperbolic equations, convection-diffusion equations, etc.   The first introduction of the DG method was given by Reed and Hill~\cite{Reed:1973} in 1973. Later, this method was extended to the framework of explicit RKDG method for solving time-dependent hyperbolic conservation laws by Cockburn, et al.~\cite{Cockburn:1990, Cockburn:1989b,  Cockburn:1989a, Cockburn:1991}.  It is well known that the DG method has many advantages, such as the flexibility of without a global continuity requirement and high order accuracy for smooth solutions, etc.  Furthermore, many studies have demonstrated the superconvergence property (order of $2k+1$ in the negative norm) of the semi-discrete DG methods~\cite{Cockburn:2003}.  Post-processing techniques can be used to enhance the accuracy of the DG solution due to this superconvergence property.    

A popular class of post-processing techniques to enhance the accuracy of DG solutions is the Smoothness-Increasing Accuracy-Conserving (SIAC) filtering.  The SIAC filtering was originally introduced by Bramble and Schatz~\cite{Bramble:1977} in 1977 for enhancing the accuracy of solutions of finite element methods. Cockburn et al. gave the first extension of this post-processing technique to DG methods.~\cite{Cockburn:2003}.  After applying the SIAC filtering, in an ideal situation, the post-processing technique can increase the accuracy order of DG solutions (with polynomials of degree $k$) from $k+1$ to $2k+1$ in the $L^2$ norm.  In addition to increasing the accuracy order, the SIAC filtering also raises the DG solution's inter-element continuity to $\mathcal{C}^{k-1}$.  This technique has attracted increased attention in recent years due to these good features and its cheap computational cost.  Since the introduction of the SIAC filtering for the DG method, many generalizations of SIAC filtering have been proposed from various perspectives.  To name a few, it has been extended to the boundary (position-dependent) filtering~\cite{Ryan:2015, Ryan:2005, vanSlingerland:2011}, the derivative filtering~\cite{Li:2016, Ryan:2009} as well as the extension to nonuniform meshes~\cite{Curtis:2008, Li:2019}.  It also has shown useful for applications in visualisation~\cite{Steffen:2008, Walfisch:2009}, shock capturing~\cite{Bohm:2019, Wissink:2018}, etc.  Hence, the B-spline based SIAC filtering techniques become one of the most well-accepted filtering techniques to enhance the DG solution's smoothness and accuracy.

However, the development of SIAC filtering has never gone beyond the usage of spline functions.  Most literature had used only B-splines (mostly, central B-splines) to construct the filter, and in a more recent research~\cite{Mirzargar:2017} had used the hex-splines (a generalization of two-dimensional B-splines to hexagonal lattice).  This paper will first investigate the generic basis function (beyond the spline functions) that can be used to construct the SIAC filter while preserving the desired ability to extract the $2k+1$ superconvergence.  The studies of the general basis function relax the structure of the SIAC filter and can provide more specific properties, such as extra the smoothness, etc.  We prove that the superconvergence property holds for this generic basis functions based SIAC filter and provides numerical results to demonstrate and validate our theoretical results.

Secondly, we study the basis functions' distribution of the filter.  The basis functions' distribution has never been studied and changed since its introduction in 2003~\cite{Cockburn:2003}.  By investigating the way of distribution, we propose a new SIAC filter called compact SIAC filter that significantly reduces the support size of the original SIAC filter.  The original SIAC filter has a support size of $3k+1$, which is quite large for higher-order $k$, especially for the multi-dimensional case.  Besides increasing the computational cost, a large support size may be problematic when introducing boundary conditions, when the mesh is highly unstructured or when the solution is lack of smoothness, etc.  The newly proposed compact SIAC filter can reduce the support size from $3k+1$ to $k+2$ and maintain the same (or even better) accuracy-enhancing ability of the original SIAC filter.  We also prove that the compact SIAC filter has the same superconvergence property as the original SIAC filter, and demonstrates better numerical performance.

The paper proceeds as follows. We first introduce the used notation in Section~\ref{sec:notation}, then we provide a brief review of the DG method in Section~\ref{sec:dg} and the standard SIAC filtering with central B-splines in Section~\ref{sec:siac}.  In Section~\ref{sec:basis}, we investigate the generic basis functions to construct the SIAC filter.  We introduce the compact SIAC filter and its superconvergence property as well as some numerical results in Section~\ref{sec:compact}.  Finally, we present the conclusion in Section~\ref{sec:conclusion}.

\section{Background}
\subsection{Notation}
\label{sec:notation}
We start by introducing the norms of function spaces used in the remainder of the paper. 
Consider a domain $\Omega \subset \mathbb{R}^d$,  one can define the standard $L^2$-norm over $\Omega$ as
\[ \|u\|_{0,\Omega} = \left(\int_\Omega u^2 dx\right)^{\frac{1}{2}}.\]
For any non-negative integer $\ell$,  the norm of Sobolev space $H^\ell(\Omega)$ is given by
\[ 
    \|u\|_{\ell,\Omega} = \left(\sum\limits_{|\alpha|\leq \ell}\| D^\alpha u\|_{0,\Omega}^2\right)^{\frac{1}{2}},
\]
where $D^\alpha$ denotes the differentiation operator of degree $\alpha$.  Further, we denote the negative norm on the domain $\Omega$ (or say the norm of the dual space of $H^\ell(\Omega)$) as 
\[ \|u\|_{-\ell,\Omega} = \sup\limits_{\phi \in \mathcal{C}^\infty_0(\Omega)}\frac{(u, \phi)_\Omega}{\|\phi\|_{\ell,\Omega}},\]
where $(\cdot,\cdot)$ represents the inner product of $H^\ell(\Omega)$.  

At last, we introduce the divided differences operator, which is crucial for the superconvergence extracting technique discussed in the rest of the paper.  For the one-dimension case,
\[
    \partial^\alpha_h u(x) = \left\{\begin{array}{cc}
    \frac{1}{h}\left(u\left(x+\frac{h}{2}\right) - u\left(x+\frac{h}{2}\right)\right),  & \alpha = 1; \\
    \partial_h\left(\partial_h^{\alpha-1} u\right), & \alpha > 1.\end{array}\right.
\]
The multi-dimensional version can be defined analogously. 

\subsection{The DG Method}
\label{sec:dg}
In this section, we give a brief description of the essential concepts of the DG method.  One can refer to~\cite{Cockburn:1990, Cockburn:1989b,  Cockburn:1989a, Cockburn:1991} for more details of the DG method.  

Consider the one-dimensional linear hyperbolic equation
\begin{equation}
  \label{eq:hyperbolic}
  u_t + au_x = 0, \qquad (x,t) \in \Omega\times[0,T],
\end{equation}
where the spatial domain $\Omega = [a, b] \subset \mathbb{R}$.  Throughout this paper, we adopt the periodic boundary conditions and a sufficient smooth initial condition
\[
u(x, 0) = u_0(x) \in \mathcal{C}^\infty(\Omega).
\]
To introduce the DG scheme, we first give a partition of $\Omega$ that
\[
    a = x_{\frac{1}{2}} < x_{\frac{3}{2}} < \cdots < x_{N+\frac{1}{2}} = b,
\]
and denote the element $I_j = [x_{j-\frac{1}{2}}, x_{j+\frac{1}{2}}]$ with size $\Delta x_j = x_{j+\frac{1}{2}} -  x_{j-\frac{1}{2}}$ for $j = 1,\dots,N$.  For convenience, in this paper, we limit the discussion to the uniform partition (it does not need to be) that $\Delta x_j = h = \frac{1}{N}(b-a)$ for $j = 1,\dots,N$.  Associated with the partition, we can define the DG approximation space
\begin{equation}
  \label{eq:space-s}
  V_h  = \left\{v_h:\, v_h\big\vert_{I_j} \in \mathbb{P}^k(I_j),\,j = 1, \dots, N\right\},
\end{equation}
where $\mathbb{P}^k(I_j)$ denotes the set of polynomials of degree up to $k$ defined on the element $I_j$.  Clearly, $V_h$ is a piecewise polynomial space, and functions in $V_h$ may have discontinuities across element interfaces.  Therefore, we need denote the notation $u_h(x_{j-\frac{1}{2}}^{-})$ and $u_h(x_{j-\frac{1}{2}}^{+})$ for $u_h \in V_h$ i.e.\ the value of $u_h$ at $x_{j-\frac{1}{2}}$ from the left side and from the right side, respectively.   

Then, the DG solution, $u_h$, for equation~\eqref{eq:hyperbolic} satisfies the semi-discretized weak formulation
\begin{equation}
  \label{eq:weak-s}
  \int_{I_j} {(u_h)}_t v_h \,dx - \int_{I_j}a u_h {(v_h)}_x \,dx + \hat{f}v_h\vert_{x_{j+\frac{1}{2}}^{-}} - \hat{f}v_h\vert_{x_{j-\frac{1}{2}}^{+}} = 0, \quad \forall v_h \in V_h,
\end{equation}
for all $1 \leq j \leq N$.  Here, the ``hat'' terms $\hat{f} = \hat{f}(u_h^{-},u_h^{+})$ is referred as the numerical flux, which is essential to ensure the stability of the numerical scheme.  In this paper, we simply use the upwind flux 
\[
  \hat{f} = \hat{f}\left(u_h^{-},u_h^{+}\right) = a u_h^{-}.
\]
For the error estimate in the $L^2$ norm, it is well-known that the exact solution $u(\cdot)$ for equation~\eqref{eq:hyperbolic} and the DG approximation $u_h(\cdot)$ for the weak formulation~\eqref{eq:weak-s} satisfy 
\[
    \|u - u_h \|_{0,\Omega} \leq C h^{k+1}.
\]
Another crucial feature of the DG method is the so-call superconvergence property. The superconvergence of the DG method is a phenomenon where the order of convergence, under certain norms, is higher than the accuracy order under the $L^2$ norm.  The focus of this paper, the accuracy-enhancing filtering, is developed mainly on the superconvergence property of the DG approximation and its divided differences under the negative norm.  For uniform meshes, we give the main theorem as follows.
\begin{thm}[Cockburn et al.~\cite{Cockburn:2003}]
    \label{thm-0}
    Let $u$ be the exact solution of equation~\eqref{eq:hyperbolic} with periodic boundary conditions, and $u_h$ the DG approximation derived by scheme~\eqref{eq:weak-s}. For a uniform mesh, the approximation and its divided differences in the $L^2$ norm, we have the following error estimate: 
\begin{equation}
    \label{eq-L2}
    \|\partial_h^{\alpha} (u-u_h)\|_{0,\Omega} \leq  C h^{k+1},
\end{equation}
and in the negative norm:
\begin{equation}
    \label{eq-neg}
    \|\partial_h^{\alpha} (u-u_h)\|_{-(k+1),\Omega} \leq  C h^{2k+1},
\end{equation}
where $\alpha$ is an arbitrary non-negative integer. 
\end{thm}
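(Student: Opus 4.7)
The plan is to exploit the translation invariance of the DG scheme on a uniform periodic mesh to reduce the divided-difference estimates to the standard ones, and then use a duality (Nitsche-type) argument against a backward-in-time adjoint problem to upgrade the $L^2$ rate $h^{k+1}$ to the negative-norm rate $h^{2k+1}$.

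First, for the $L^2$ estimate \eqref{eq-L2}, I would observe that on a uniform mesh with periodic boundary conditions the shift operator $S_h: v(x) \mapsto v(x+h/2)$ maps $V_h$ onto itself, commutes with the spatial derivative, and preserves the upwind flux. Consequently the centered divided difference $\partial_h$ commutes with the semi-discrete DG operator: if $u_h$ solves \eqref{eq:weak-s} with initial data $\pi_h u_0$, then $\partial_h^\alpha u_h$ is itself the DG solution of the same equation \eqref{eq:hyperbolic} with initial data $\pi_h(\partial_h^\alpha u_0)$. Because $u_0 \in \mathcal{C}^\infty(\Omega)$, the initial datum $\partial_h^\alpha u_0$ is smooth uniformly in $h$, and the exact solution of the shifted problem is $\partial_h^\alpha u$. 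Applying the standard DG $L^2$ error bound $\|w-w_h\|_{0,\Omega} \le C h^{k+1}$ to this new pair yields \eqref{eq-L2}.

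Next, for the negative-norm estimate \eqref{eq-neg}, I would fix $\phi \in \mathcal{C}_0^\infty(\Omega)$ and use periodicity together with summation-by-parts for the centered difference to get $(\partial_h^\alpha(u-u_h),\phi) = (-1)^\alpha (u-u_h, \partial_h^\alpha \phi)$, with $\|\partial_h^\alpha \phi\|_{\ell,\Omega}$ controlled uniformly in $h$ by $\|\phi\|_{\ell+\alpha,\Omega}$ via Taylor expansion. It therefore suffices to estimate $(u-u_h,\psi)$ with $\psi := \partial_h^\alpha \phi$. I would then introduce the dual problem $\varphi_t + a\varphi_x = 0$ solved backward in time from $\varphi(\cdot,T)=\psi$, so that smoothness of $\psi$ transports to smoothness of $\varphi$ on $[0,T]$. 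By Galerkin orthogonality applied to \eqref{eq:weak-s} and an integration by parts in $t$, the error $(u(\cdot,T)-u_h(\cdot,T),\psi)$ can be rewritten as an integral in $t$ involving $u-u_h$ and $\varphi - \Pi_h \varphi$, where $\Pi_h$ is a standard polynomial projection. Bounding the first factor by \eqref{eq-L2} (which gives $h^{k+1}$) and the projection error of the smooth $\varphi$ by $C h^{k+1}\|\varphi\|_{k+1}$ produces the factor $h^{2k+1}$; tracking the dependence on $\phi$ yields a bound proportional to $\|\phi\|_{k+1,\Omega}$, which is exactly \eqref{eq-neg} after taking the supremum in the definition of $\|\cdot\|_{-(k+1),\Omega}$.

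The routine pieces are the projection estimates and the stability of the adjoint flow. The delicate point, and what I expect to be the main obstacle, is justifying commutation of $\partial_h^\alpha$ with the DG operator: one needs the upwind numerical flux, the uniform partition, and the periodic boundary conditions to align so that shifts leave the discrete bilinear form invariant, and one must carry out the initial projection consistently so that $\partial_h^\alpha u_h(\cdot,0) = \pi_h \partial_h^\alpha u_0$. Once that translation-invariance step is made rigorous, the remaining estimates follow from standard DG tools plus the classical duality recipe adapted to the hyperbolic, time-dependent setting.
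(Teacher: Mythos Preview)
The paper does not supply its own proof of this theorem; it is quoted as a result of Cockburn et al.\ and used as a black box. So there is no in-paper argument to compare against, and your proposal has to be judged on its own merits against the original reference.

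Your treatment of the $L^2$ estimate \eqref{eq-L2} is the standard one and is correct: on a uniform periodic mesh the shift operator preserves $V_h$ and the upwind bilinear form, so $\partial_h^\alpha u_h$ is the DG approximation of $\partial_h^\alpha u$, and the usual $\mathcal{O}(h^{k+1})$ bound applies verbatim.

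The negative-norm part, however, has a genuine gap. You begin by moving the divided differences onto the test function,
\[
(\partial_h^\alpha(u-u_h),\phi) = (-1)^\alpha (u-u_h,\partial_h^\alpha\phi),
\]
and then run the duality argument on $(u-u_h,\psi)$ with $\psi=\partial_h^\alpha\phi$. The duality step produces a bound of the form $Ch^{2k+1}\|\varphi\|_{k+1}$, and the adjoint solution $\varphi$ inherits the regularity of its terminal datum $\psi$, so this is $Ch^{2k+1}\|\psi\|_{k+1}=Ch^{2k+1}\|\partial_h^\alpha\phi\|_{k+1}$. By your own Taylor-expansion remark this is controlled only by $\|\phi\|_{k+1+\alpha}$, not by $\|\phi\|_{k+1}$. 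Taking the supremum therefore yields
\[
\|\partial_h^\alpha(u-u_h)\|_{-(k+1+\alpha),\Omega}\le Ch^{2k+1},
\]
which is strictly weaker than \eqref{eq-neg} for $\alpha\ge 1$. The sentence ``tracking the dependence on $\phi$ yields a bound proportional to $\|\phi\|_{k+1,\Omega}$'' is exactly where the argument breaks.

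The fix is already in your hands: do not shift $\partial_h^\alpha$ onto $\phi$ at all. You proved in the first step that $(\partial_h^\alpha u,\partial_h^\alpha u_h)$ is itself an exact/DG pair for the same equation with smooth data bounded uniformly in $h$. Run the backward-adjoint duality argument directly on \emph{that} pair with terminal datum $\phi$. The projection error of the adjoint then costs $\|\phi\|_{k+1}$ with no $\alpha$-dependent loss, and you recover \eqref{eq-neg} as stated. This is precisely how the original Cockburn--Luskin--Shu argument is organized: translation invariance is used for both estimates, not just the $L^2$ one.
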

The relation between the $L^2$ norm and the negative norm was given by
\begin{lmm}[Bramble and Schatz \cite{Bramble:1977}]
    \label{lmm-bramble}
    Let $\Omega_0 \subset\subset \Omega_1$ and $s$ be an arbitrary but fixed non-negative integer. Then for $u \in H^s(\Omega_1)$, there exist a constant $C$ such that
    \[ \|u\|_{0,\Omega_0} \leq C \sum\limits_{|\alpha|\leq s}\|D^\alpha u\|_{-s,\Omega_1}. \]
\end{lmm}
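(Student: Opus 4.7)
The strategy is a duality-plus-mollification argument. By the duality definition of the $L^2$ norm,
\[
    \|u\|_{0,\Omega_0} = \sup\bigl\{(u,\phi)_{\Omega_0} \,:\, \phi \in \mathcal{C}^\infty_0(\Omega_0),\ \|\phi\|_{0,\Omega_0}\leq 1\bigr\},
\]
so it suffices to bound $(u,\phi)$ uniformly for such test functions. Since $\Omega_0\subset\subset\Omega_1$, I would fix $\delta>0$ with $\Omega_0 + B(0,\delta) \subset \Omega_1$, and fix a symmetric mollifier $\eta \in \mathcal{C}^\infty_0(B(0,1))$ with unit mass and vanishing moments $\int y^\beta \eta(y)\,dy = 0$ for $1\leq|\beta|<s$. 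Setting $\eta_\varepsilon(y) = \varepsilon^{-d}\eta(y/\varepsilon)$ and $\phi_\varepsilon = \phi \ast \eta_\varepsilon$, I have $\phi_\varepsilon \in \mathcal{C}^\infty_0(\Omega_1)$ whenever $\varepsilon<\delta$.

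I would then split $(u,\phi) = (u,\phi_\varepsilon) + (u,\phi-\phi_\varepsilon)$ and estimate the two pieces separately. For $(u,\phi_\varepsilon)$, the $H^{-s}(\Omega_1)$--$H^s_0(\Omega_1)$ duality combined with the standard convolution bound $\|\phi_\varepsilon\|_{s,\Omega_1} \leq C\varepsilon^{-s}\|\phi\|_{0,\Omega_0}$ gives
\[
    |(u,\phi_\varepsilon)_{\Omega_1}| \leq C\,\varepsilon^{-s}\,\|u\|_{-s,\Omega_1}\,\|\phi\|_{0,\Omega_0}.
\]
For $(u,\phi-\phi_\varepsilon)$, the vanishing-moment hypothesis together with an $s$th-order Taylor expansion rewrites $\phi - \phi_\varepsilon$ as a sum, over multi-indices $|\alpha|=s$, of $\varepsilon^s$ times convolutions of $D^\alpha\phi$ with smooth kernels supported in $B(0,\varepsilon)$. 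Shifting the convolution to the $u$-side of the pairing and integrating by parts (legitimate because everything is compactly supported inside $\Omega_1$) moves all $s$ derivatives onto $u$, and a further application of the $H^{-s}$--$H^s_0$ duality produces
\[
    |(u,\phi-\phi_\varepsilon)_{\Omega_1}| \leq C\,\varepsilon^s\,\sum_{|\alpha|=s}\|D^\alpha u\|_{-s,\Omega_1}\,\|\phi\|_{0,\Omega_0}.
\]

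Adding the two bounds and choosing $\varepsilon$ equal to a fixed fraction of $\delta$ absorbs all $\varepsilon$-powers into the constant $C$, and taking the supremum over admissible $\phi$ then yields the claim. The main obstacle is the second estimate: one must construct the mollifier with enough vanishing moments and carry out the Taylor-plus-integration-by-parts step so that (i) exactly $s$ derivatives land on $u$, producing the $\|D^\alpha u\|_{-s,\Omega_1}$ factors, and (ii) the residual test-function kernel inherits a uniform $H^s_0(\Omega_1)$ bound depending only on $\|\phi\|_{0,\Omega_0}$. A convenient concrete choice is to take $\eta$ as a tensor product of suitably recentered central B-splines of order at least $s$, for which both the moment cancellations and the needed Sobolev bounds on the resulting kernels $K_\alpha$ are explicit.
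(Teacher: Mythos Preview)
The paper does not supply a proof of this lemma; it is quoted from Bramble and Schatz and used as a black box in the later theorems, so there is no in-paper argument to compare your proposal against.

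That said, your sketch has a real gap in the second estimate. After the Taylor expansion and integration by parts you arrive at $(u,\phi-\phi_\varepsilon)=\sum_{|\alpha|=s}\pm(D^\alpha u,\Psi_{\alpha,\varepsilon})$ with
\[
\Psi_{\alpha,\varepsilon}(x)=\frac{s}{\alpha!}\int_0^1\!\!\int \eta_\varepsilon(y)\,(-y)^\alpha(1-t)^{s-1}\,\phi(x-ty)\,dy\,dt,
\]
and to invoke the duality with $\|D^\alpha u\|_{-s,\Omega_1}$ you must bound $\|\Psi_{\alpha,\varepsilon}\|_{s,\Omega_1}$ by a constant times $\|\phi\|_{0}$. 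This fails: differentiating $\Psi_{\alpha,\varepsilon}$ in $x$ lands on $\phi$ and gives only $\|\Psi_{\alpha,\varepsilon}\|_s\le C\varepsilon^s\|\phi\|_s$, with $\|\phi\|_s$ uncontrolled; attempting instead to pass the $x$-derivative to the kernel via $\partial_x\phi(x-ty)=-t^{-1}\partial_y\phi(x-ty)$ and integrating by parts in $y$ introduces a non-integrable $t^{-1}$ factor near $t=0$. Equivalently, the Taylor-remainder kernels $G_{\alpha,\varepsilon}$ defined by $\phi-\phi_\varepsilon=\sum_\alpha G_{\alpha,\varepsilon}\ast D^\alpha\phi$ are singular at the origin and do not lie in $W^{s,1}$, so no Young-type inequality produces $\|G_{\alpha,\varepsilon}\ast\phi\|_s\le C\|\phi\|_0$. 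Thus the claimed extra $\varepsilon^s$ factor is illusory, and without a correct replacement for this step the splitting cannot close. (Choosing $\eta$ to be a B-spline does not rescue the argument: a single central B-spline has no vanishing moments beyond order zero, and the structural obstruction above is independent of the particular mollifier.)

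The standard proof is quite different: multiply $u$ by a cutoff $\chi\in\mathcal{C}^\infty_0(\Omega_1)$ with $\chi\equiv 1$ on $\Omega_0$, use the Fourier-side identity $(1+|\xi|^2)^s\simeq\sum_{|\alpha|\le s}|\xi^\alpha|^2$ to obtain $\|\chi u\|_0^2\le C\sum_{|\alpha|\le s}\|D^\alpha(\chi u)\|_{H^{-s}(\mathbb{R}^d)}^2$, and then expand $D^\alpha(\chi u)$ by the Leibniz rule and use that multiplication by derivatives of $\chi$ is bounded on $H^{-s}$ to reduce to $\|D^\alpha u\|_{-s,\Omega_1}$. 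An equivalent kernel-free statement is that every $\phi\in\mathcal{C}^\infty_0(\Omega_0)$ admits a representation $\phi=\sum_{|\alpha|\le s}D^\alpha\phi_\alpha$ with $\phi_\alpha\in\mathcal{C}^\infty_0(\Omega_1)$ and $\|\phi_\alpha\|_s\le C\|\phi\|_0$; this is precisely the decomposition your mollification step is implicitly trying, and failing, to manufacture.
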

Theorem~\ref{thm-0} is the superconvergence property of the DG method, and Lemma~\ref{lmm-bramble} shows that the negative norm of a function and its derivatives can bound the $L^2$ norm of the function itself.  Together, they construct the theoretical foundation of techniques for extracting the superconvergence of the DG solution.  Those techniques are usually referred to as superconvergence extract or post-processing techniques.  One popular class of these techniques is the so-called smoothness-increasing accuracy-conserving (SIAC) filtering.  The purpose of this paper is to investigate how to design a generic accuracy-enhancing filter for the DG method.  Before further discussion, we first review the standard SIAC filtering and the properties for constructing accuracy-enhancing filtering.

\subsection{The SIAC Filtering}
\label{sec:siac}
Along with developing the DG method, the SIAC filtering study has become an area of increased interest.  The primary attracted attention of the SIAC filtering is its ability to extract the superconvergence (higher-order information) from the DG solution to achieve the purpose of enhancing the accuracy.  The SIAC filtering sourced from the post-processing technique of Bramble and Schatz~\cite{Bramble:1977} proposed for the finite element method, and extended to the DG method by Cockburn et al.~\cite{Cockburn:2003} in 2003. Since then, many researchers have studied various aspects of the SIAC filtering techniques in the literature, see~\cite{Ji:2013, Li:2016, Li:2015, Ryan:2015, Ryan:2009}.  In this section, we review the basic structure of the SIAC filtering as follows. 

The SIAC filtering is applied only at the final time, $T$ of the DG solution.  The filtered solution, $u_h^{\star}$, is given by 
\begin{equation}
  \label{eq:ustar}
    u_h^{\star}(x, T) = \left(K_H^{(2k+1,\,k+1)}\ast u_h\right)(x, T)= \int_{-\infty}^{\infty}K_H^{(2k+1,\,k+1)}(x - \xi)u_h(\xi, T)\, d\xi.
\end{equation}
In the filtering convolution~\eqref{eq:ustar}, the SIAC filter, $K^{(2k+1,\,k+1)}$, is a linear combination of certain basis functions $\psi^{(k+1)}$,
\begin{equation}
    \label{eq:filter}
    K^{(2k+1,\,k+1)}(x) = \sum\limits_{\gamma=0}^{2k}c^{(2k+1,\,k+1)}_{\gamma}\psi^{(k+1)}\left(x - x_\gamma\right),
\end{equation}
where 
\begin{equation}
    \label{eq:cpoints}
    x_\gamma = -k + \gamma
\end{equation} 
denotes the central point of the $\gamma-$th basis function.  Moreover, the scaled filter, $K_H$, is given by
\[
  K_H^{(2k+1,\,k+1)}(x) = \frac{1}{H}K^{(2k+1,\,k+1)}\left(\frac{x}{H}\right)
\] 
where $H$ is the filter scaling.  For uniform meshes, we usually choose the scaling to be the (uniform) element size $H = h$, and denote the scaled filter $K_H$ as $K_h$.  For non-uniform meshes, the simple choice is choosing the scaling as the local element size~\cite{Ryan:2015} or the largest element size~\cite{Curtis:2008}.  For more recent research results, one can determine the scaling according to the structure of the given non-uniform mesh to obtain the optimal accuracy, see~\cite{Li:2019}.

\subsubsection{The Central B-spline}
The basis functions used in the SIAC filter~\eqref{eq:filter} are central B-splines.  The $k+1$ order central B-spline,$\,\psi^{(k+1)}(x)$, can be constructed recursively
\begin{equation}
    \begin{split}
      \psi^{(1)} & = \chi_{[-\frac{1}{2},\frac{1}{2}]}(x), \\
        \psi^{(\ell+1)}(x) & =  \frac{1}{\ell}\left(\left(\frac{\ell+1}{2}+x\right)\psi^{(\ell)}\left(x+\frac{1}{2}\right) + \left(\frac{\ell+1}{2}-x\right)\psi^{(\ell)}\left(x-\frac{1}{2}\right)\right), \,\, \text{for~} \ell \geq 1.
    \end{split}
\end{equation}
For example, we give the analytic formulas and plots (see Figure~\ref{figure-central-bspline}) of the central B-splines $\psi^{(k+1)}$ (with $k = 1, 2, 3$) as follows:
\begin{equation}
    \label{eq:bsplines}
  \begin{split}
    \psi^{(2)}(x) & = \left\{\begin{array}{ll}
        1 + x, & x \in [-1,0), \\
        1 - x, & x \in [0, 1], \\
        0, & \text{otherwise};
    \end{array} \right. \\
    \psi^{(3)}(x) & = \left\{\begin{array}{ll}
        \frac{1}{8}(2x+3)^2, & x \in [-\frac{3}{2},-\frac{1}{2}), \\
        \frac{1}{4}(-4x^2 + 3), & x \in [-\frac{1}{2}, \frac{1}{2}), \\
        \frac{1}{8}(2x-3)^2, & x \in [\frac{1}{2},-\frac{3}{2}] \\
        0, & \text{otherwise};
    \end{array} \right. \\
    \psi^{(4)}(x) & = \left\{\begin{array}{ll}
        \frac{1}{6}(x+2)^3,        & x \in [-2,-1), \\
        \frac{1}{6}(-3x^3-6x^2+4), & x \in [-1, 0), \\
        \frac{1}{6}( 3x^3-6x^2+4), & x \in [ 0, 1), \\
        \frac{1}{6}(2-x)^3,        & x \in [ 1, 2], \\
        0, & \text{otherwise}.
    \end{array} \right.
  \end{split}
\end{equation}

\begin{figure}[!ht]
    \centering
    \begin{minipage}[c]{0.32\textwidth}
        \centerline{$\quad k = 1$}
    \end{minipage}
    \begin{minipage}[c]{0.32\textwidth}
        \centerline{$\quad k = 2$}
    \end{minipage}
    \begin{minipage}[c]{0.32\textwidth}
        \centerline{$\quad k = 3$}
    \end{minipage}

    \begin{minipage}[c]{0.32\textwidth}
        \includegraphics[width=1.\textwidth]{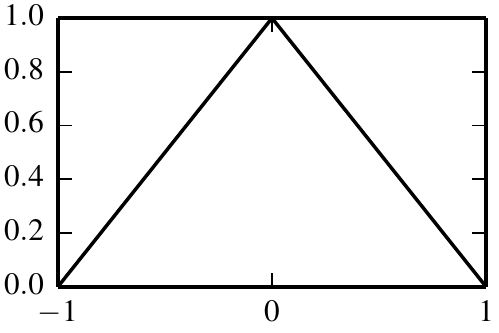}
    \end{minipage}
    \begin{minipage}[c]{0.32\textwidth}
        \includegraphics[width=1.\textwidth]{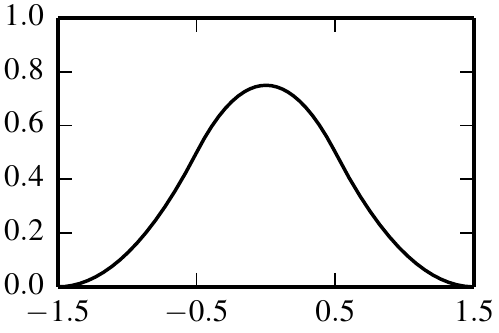}
    \end{minipage}
    \begin{minipage}[c]{0.32\textwidth}
        \includegraphics[width=1.\textwidth]{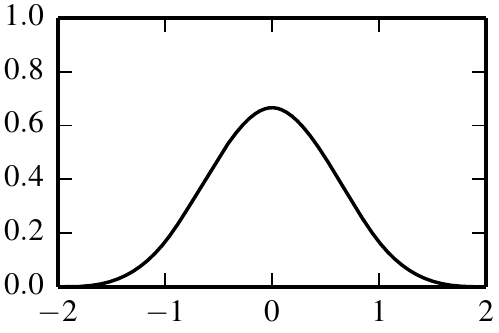}
    \end{minipage}

    \begin{center}
    \caption{\label{figure-central-bspline}
        Central B-spline $\psi^{(k+1)}$ with $k = 1, 2, 3$, see~\eqref{eq:bsplines}. 
}
     \end{center}
\end{figure}
The main reason for using the central B-spline to construct the filter is that its special properties that aid in the proofs of extracting higher-order accuracy in the negative norm.  The crucial one of these properties is differentiation property:
\begin{prop}[Differentiation of Central B-spline]
    \label{prop-centralspline}
    The $\alpha$th derivative of a central B-spline is given by 
    \[ 
        D^\alpha \psi_h^{(\ell)} = \partial^\alpha_h \psi_h^{(\ell-\alpha)}, 
    \]
    where $\psi_h^{(\ell)}$ is the central B-spline with scaling $h$. 
\end{prop}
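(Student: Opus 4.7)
The plan is to prove the base case $\alpha=1$ directly from the structure of the central B-spline, and then promote it to arbitrary $\alpha$ by a short induction, using the fact that ordinary differentiation $D$ and the divided difference $\partial_h$ commute (both being translation-invariant linear operators).

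For the base case, I would first exploit the convolution characterization $\psi^{(\ell)} = \psi^{(\ell-1)}\ast\psi^{(1)}$, which one can verify directly from the recursive formula by checking that it satisfies the same recursion and initial condition. Differentiating through the convolution gives $D\psi^{(\ell)} = \psi^{(\ell-1)} \ast D\psi^{(1)}$. Since $\psi^{(1)}=\chi_{[-1/2,1/2]}$ is the indicator of a unit interval, its distributional derivative is $D\psi^{(1)} = \delta_{-1/2}-\delta_{1/2}$, so convolution sifts out
\[
    D\psi^{(\ell)}(x) = \psi^{(\ell-1)}\!\left(x+\tfrac{1}{2}\right) - \psi^{(\ell-1)}\!\left(x-\tfrac{1}{2}\right),
\]
which is precisely the unscaled divided difference of $\psi^{(\ell-1)}$. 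To handle the scaling, write $\psi^{(\ell)}_h(x)=\frac{1}{h}\psi^{(\ell)}(x/h)$ and apply the chain rule: the extra factor of $1/h$ from differentiating the argument combines with the normalisation of $\psi^{(\ell-1)}_h$ at shifted arguments $x\pm h/2$ to produce exactly $\partial_h \psi_h^{(\ell-1)}$. This establishes $D\psi_h^{(\ell)} = \partial_h \psi_h^{(\ell-1)}$.

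For the inductive step, assume $D^{\alpha}\psi_h^{(\ell)} = \partial_h^{\alpha}\psi_h^{(\ell-\alpha)}$. Applying $D$ once more and using that $D$ commutes with the shift-based operator $\partial_h$, I would write
\[
    D^{\alpha+1}\psi_h^{(\ell)} = D\!\left(\partial_h^{\alpha}\psi_h^{(\ell-\alpha)}\right) = \partial_h^{\alpha} D \psi_h^{(\ell-\alpha)} = \partial_h^{\alpha}\partial_h \psi_h^{(\ell-\alpha-1)} = \partial_h^{\alpha+1}\psi_h^{(\ell-\alpha-1)},
\]
where the penultimate equality is the base case applied to $\psi_h^{(\ell-\alpha)}$. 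This closes the induction and proves the property for every $\alpha \ge 1$.

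The only delicate point is the base case: one must justify interpreting $D\psi^{(1)}$ as a difference of Dirac masses so that the convolution identity makes sense (for $\ell\geq 2$ the resulting $\psi^{(\ell-1)}$ is continuous, so the difference is a bona fide function and no distributional subtlety remains downstream). Everything else is bookkeeping with the scaling factor $1/h$. As an alternative route, the base case could instead be derived by differentiating the explicit recursion defining $\psi^{(\ell)}$ and rearranging the resulting four terms, but the convolution argument above is substantially cleaner.
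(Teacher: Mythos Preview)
The paper does not actually supply a proof of this property: it is stated as a known fact about central B-splines and then used to derive Property~\ref{prop-divided}. Your argument is correct and is the standard way to establish it; the convolution identity $\psi^{(\ell)}=\psi^{(\ell-1)}\ast\chi_{[-1/2,1/2]}$, differentiation through the convolution, and then induction are exactly what is needed, and your handling of the scaling factor is right.

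One small observation worth making: in Section~\ref{sec:basis} the paper runs essentially the reverse implication. It \emph{assumes} the divided-difference relation~\eqref{eq:divided} as the defining requirement on a generic basis family $\{\phi^{(\ell)}\}$ and then, via the Fourier transform, \emph{derives} the convolution recursion $\phi^{(\ell)}=\phi^{(\ell-1)}\ast\chi_{[-1/2,1/2]}$. Your proof goes the other way: you take the convolution structure (which for central B-splines can be checked from the recursive definition) and deduce the differentiation identity. The two directions together show the equivalence of these characterizations, so your route dovetails nicely with the paper's later development even though the paper itself leaves Property~\ref{prop-centralspline} unproved.
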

Property~\ref{prop-centralspline} shows that the derivatives of a central B-spline can be expressed only by its divided differences, and it directly leads to a similar property of the filter $K^{(2k+1,k+1)}$ as the following Property~\ref{prop-divided}. 
\begin{prop}
    \label{prop-divided}
    As a consequence of the filter constructed by using central B-splines (Property~\ref{prop-centralspline}), one can express derivatives of the convolution with the filter in terms of simple difference quotients.  It is trivial to verify that 
    \[ 
        D^\alpha(K_h^{(2k+1,k+1)} \star u_h) = \tilde{K}_h^{(2k+1,k+1-\alpha,\alpha)} \star \partial_h^\alpha u_h, 
    \]
    where $\alpha$ is a non-negative integer ($\alpha \leq k+1$) and 
    \[
        \tilde{K}_h^{(2k+1,k+1-\alpha,\alpha)} = \sum\limits_{\gamma=0}^{2k} c_{\gamma}^{(2k+1,k+1)}\psi^{(k+1-\alpha)}(x - x_\gamma).
    \]
\end{prop}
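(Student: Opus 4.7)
The plan is to push the ordinary derivative $D^{\alpha}$ through the convolution onto the filter, convert it into divided differences on each central B-spline via Property~\ref{prop-centralspline}, and then transfer those divided differences from the kernel onto $u_h$ using translation invariance of the convolution.

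First, using linearity of both the derivative and the convolution together with the expansion~\eqref{eq:filter} of the filter, I would write
\[
D^{\alpha}\!\left(K_h^{(2k+1,k+1)}\star u_h\right)
= \sum_{\gamma=0}^{2k} c_{\gamma}^{(2k+1,k+1)}\left(D^{\alpha}\psi_h^{(k+1)}(\,\cdot\,-x_{\gamma})\right)\star u_h,
\]
since differentiation commutes with translation and passes through convolution to act on a single factor. Next I would apply Property~\ref{prop-centralspline} to each summand, replacing $D^{\alpha}\psi_h^{(k+1)}(\,\cdot\,-x_{\gamma})$ by $\partial_{h}^{\alpha}\psi_h^{(k+1-\alpha)}(\,\cdot\,-x_{\gamma})$. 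This is legitimate as long as $\alpha\leq k+1$, which is exactly the hypothesis; the restriction $\alpha\leq k+1$ ensures that the reduced-order B-spline $\psi_h^{(k+1-\alpha)}$ is still well defined.

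The core step is then to move $\partial_{h}^{\alpha}$ off the kernel and onto $u_h$. Since $\partial_h$ is a finite linear combination of translation operators (shifts by $\pm h/2$), and since for any $s$ one has $(\tau_{s}f)\star g=f\star(\tau_{s}g)$ by a change of variables, it follows that $(\partial_{h}f)\star g=f\star(\partial_{h}g)$. I would verify this for $\alpha=1$ by writing out the convolution integral explicitly and substituting $\eta=\xi\pm h/2$, and then iterate to obtain $(\partial_{h}^{\alpha}f)\star g=f\star(\partial_{h}^{\alpha}g)$ for arbitrary $\alpha$. Applying this identity to each summand and collecting the $c_{\gamma}^{(2k+1,k+1)}\psi_h^{(k+1-\alpha)}(\,\cdot\,-x_{\gamma})$ into the definition of $\tilde{K}_h^{(2k+1,k+1-\alpha,\alpha)}$ yields the claimed identity.

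The main (and really only nontrivial) obstacle is the bookkeeping in the last step: verifying that the divided-difference operator passes cleanly across the convolution without a sign change or a factor of $h$ being introduced, and confirming that after the transfer the shifts are still symmetric, so that the resulting expression genuinely has the form of $\partial_h^{\alpha}u_h$ rather than some asymmetric variant. Once this is checked for $\alpha=1$, the iteration to general $\alpha$ is automatic from associativity of convolution and composition of $\partial_h$ with itself.
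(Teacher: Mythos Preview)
Your approach is correct and is exactly the argument the paper has in mind: the paper does not spell out a proof at all, stating only that ``it is trivial to verify'' the identity as a direct consequence of Property~\ref{prop-centralspline}. Your three steps---passing $D^{\alpha}$ onto the kernel, invoking Property~\ref{prop-centralspline} on each B-spline summand, and then shifting $\partial_h^{\alpha}$ across the convolution via translation invariance---are precisely the routine verification the paper is alluding to.
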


The coefficients $c^{(2k+1,k+1)}_{\gamma}$ in~\eqref{eq:filter} are decided by requiring the filter satisfies the property (Property~\ref{prop:reprod}) that reproduces polynomials by convolution up to degree $2k$, 
\begin{prop}
    \label{prop:reprod}
    The filter $K^{(2k+1,k+1)}$ defined in~\eqref{eq:filter} satisfies the property of reproducing polynomial by convolution until degree of $2k$,
    \begin{equation}
        \label{eq:reprod}
        K^{(2k+1,k+1)} \star p = p,\, p = 1, x, \ldots, x^{2k}.
    \end{equation}
\end{prop}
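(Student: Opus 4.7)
The plan is to show that the $2k+1$ polynomial reproduction identities $K^{(2k+1,k+1)} \star x^m = x^m$ for $m = 0, 1, \ldots, 2k$ are equivalent to $2k+1$ moment conditions on the filter, and that those moment conditions in turn determine the coefficients $c_\gamma^{(2k+1,k+1)}$ uniquely through a Vandermonde system on the centers $x_\gamma = -k+\gamma$ given by~\eqref{eq:cpoints}.

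First I would rewrite each convolution with a binomial expansion. For any integer $m \geq 0$,
\[
    (K^{(2k+1,k+1)} \star x^m)(x) = \int_{\mathbb{R}} K^{(2k+1,k+1)}(y)\,(x-y)^m\,dy = \sum_{j=0}^{m} \binom{m}{j}(-1)^j M_j\, x^{m-j},
\]
where $M_j := \int_{\mathbb{R}} y^j K^{(2k+1,k+1)}(y)\,dy$ denotes the $j$-th moment of the filter. Matching coefficients of $x^{m-j}$ against $x^m$ for $m = 0, 1, \ldots, 2k$ collapses the reproduction requirement~\eqref{eq:reprod} into the single moment system $M_0 = 1$ and $M_j = 0$ for $j = 1, 2, \ldots, 2k$, a total of $2k+1$ scalar conditions matching the $2k+1$ unknown coefficients in~\eqref{eq:filter}.

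Next I would substitute the explicit form~\eqref{eq:filter} into the moments. Shifting variables $z = y - x_\gamma$ inside each summand and applying the binomial theorem to $(z + x_\gamma)^j$ yields
\[
    M_j = \sum_{\gamma=0}^{2k} c_\gamma^{(2k+1,k+1)} \int_{\mathbb{R}} (z + x_\gamma)^j\, \psi^{(k+1)}(z)\,dz = \sum_{i=0}^{j} \binom{j}{i} m_i\, S_{j-i},
\]
where $m_i := \int_{\mathbb{R}} z^i \psi^{(k+1)}(z)\,dz$ and $S_\ell := \sum_{\gamma=0}^{2k} c_\gamma^{(2k+1,k+1)}\, x_\gamma^\ell$. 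Because $\psi^{(k+1)}$ is a probability density, $m_0 = 1$, so this relation is lower-triangular in $(S_0, S_1, \ldots, S_{2k})$ with unit diagonal. It can therefore be solved recursively, starting from $S_0 = 1$ and setting $S_j = -\sum_{i=1}^{j}\binom{j}{i} m_i S_{j-i}$ for $j \geq 1$, to produce a unique target vector $(\hat S_0, \hat S_1, \ldots, \hat S_{2k})$.

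Finally, recovering the coefficients from $\sum_{\gamma=0}^{2k} c_\gamma^{(2k+1,k+1)} x_\gamma^\ell = \hat S_\ell$ for $\ell = 0, \ldots, 2k$ amounts to inverting a $(2k+1)\times(2k+1)$ Vandermonde matrix on the nodes $x_\gamma = -k + \gamma$, which are pairwise distinct by~\eqref{eq:cpoints}; hence the coefficients exist and are unique. The only delicate point I anticipate is precisely this last invertibility step: if fewer than $2k+1$ distinct translation centers were prescribed, the $2k+1$ moment conditions could not all be enforced, so the choice of $2k+1$ unit-spaced nodes in~\eqref{eq:cpoints} is what makes the definition of $K^{(2k+1,k+1)}$ well-posed. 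This observation will also be the structural lever exploited later in the paper, since any alternative node placement remains admissible as long as it preserves $2k+1$ distinct centers and the corresponding Vandermonde system stays invertible.
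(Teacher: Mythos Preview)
Your argument is correct, but note that the paper does not actually supply a proof at the location of Property~\ref{prop:reprod}: the text immediately preceding it says the coefficients $c_\gamma^{(2k+1,k+1)}$ are \emph{defined} by requiring~\eqref{eq:reprod}, so the property holds by construction. The nontrivial content---that such coefficients exist and are unique---is what you are really proving, and the paper defers that to Theorem~\ref{thm:coeff} in Section~\ref{sec:basis}.

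Your route to that existence/uniqueness result differs from the paper's. You exploit the translation structure of~\eqref{eq:filter}: reducing reproduction to the moment conditions $M_0=1$, $M_j=0$, then factoring $M_j$ through the B-spline moments $m_i$ and the power sums $S_\ell=\sum_\gamma c_\gamma x_\gamma^\ell$, and finally inverting a Vandermonde matrix on the distinct nodes $x_\gamma$. The paper's Theorem~\ref{thm:coeff} instead treats the functions $\phi_\gamma$ as abstract linearly independent compactly supported functions with nonzero integral, forms the matrix $A(\gamma,m)(x)=\int\phi_\gamma(\xi)(x-\xi)^m\,d\xi=x^m+\text{(lower order)}$, and shows its rows are linearly independent directly from this upper-triangular-in-$m$ structure. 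Your argument is more constructive and makes the role of the node placement~\eqref{eq:cpoints} explicit through the Vandermonde system---which, as you anticipate, is exactly what the paper leverages in Section~\ref{sec:compact} when compressing the nodes. The paper's argument is more general, since it does not require the $\phi_\gamma$ to be translates of a single function, only that they be linearly independent with nonzero mass.
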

Now, we have constructed the SIAC filter by using central B-splines.  In Figure~\ref{figure-filter}, we present the plots of the filter $K^{(2k+1,k+1)}$ with $k = 1, 2 ,3$.

\begin{figure}[!ht]
    \centering
    \begin{minipage}[c]{0.32\textwidth}
        \centerline{$\quad k = 1$}
    \end{minipage}
    \begin{minipage}[c]{0.32\textwidth}
        \centerline{$\quad k = 2$}
    \end{minipage}
    \begin{minipage}[c]{0.32\textwidth}
        \centerline{$\quad k = 3$}
    \end{minipage}

    \begin{minipage}[c]{0.32\textwidth}
        \includegraphics[width=1.\textwidth]{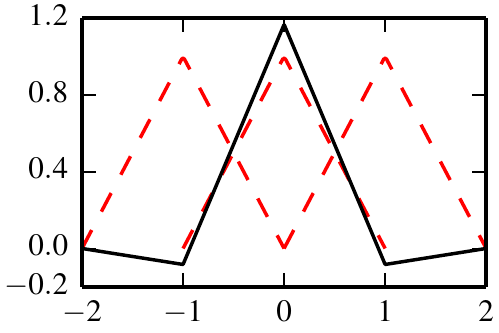}
    \end{minipage}
    \begin{minipage}[c]{0.32\textwidth}
        \includegraphics[width=1.\textwidth]{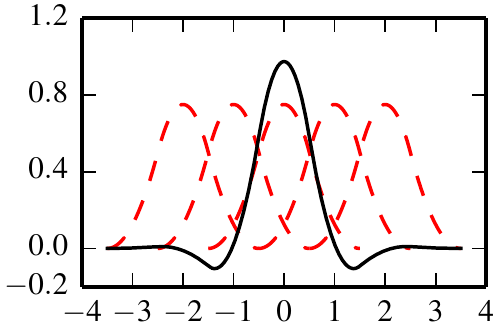}
    \end{minipage}
    \begin{minipage}[c]{0.32\textwidth}
        \includegraphics[width=1.\textwidth]{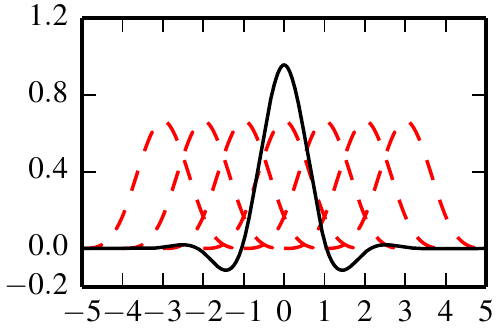}
    \end{minipage}

    \begin{center}
    \caption{\label{figure-filter}
        Solid black lines represent the SIAC filter $K^{(2k+1,k+1)}(x)$ with $k = 1, 2, 3$, dashed red lines represent the respective central B-splines.  The filtered point is located at $x = 0$.
}
     \end{center}
\end{figure}

Properties~\ref{prop-divided} and~\ref{prop:reprod} are the key to extract the superconvergence from DG solutions, together with Theorem~\ref{thm-0} and Lemma~\ref{lmm-bramble} we obtain the error estimates for the filtered solution $u_h^\star$.
\begin{thm}[Cockburn et al.~\cite{Cockburn:2003}]
    \label{thm-1}
    Under the same conditions in Theorem~\ref{thm-0}, denote $\Omega_0 + 2 \text{supp}(K_h^{(2k+1,k+1)}) \subset\subset \Omega_1 \subset \Omega$, then 
    \[ 
        \|u - K_h^{(2k+1,k+1)}\star u_h\|_{0,\Omega_0} \leq C h^{2k+1}.
    \]
\end{thm}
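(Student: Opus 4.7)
The plan is to split the error in the standard Bramble--Schatz manner and treat the two resulting pieces separately. Write
\[
u - K_h^{(2k+1,k+1)}\star u_h = \bigl(u - K_h^{(2k+1,k+1)}\star u\bigr) + K_h^{(2k+1,k+1)}\star (u-u_h),
\]
so it suffices to bound each summand by $Ch^{2k+1}$ in $L^2(\Omega_0)$.

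The first, consistency, term I would control with Property~\ref{prop:reprod}. Rescaling shows that $K_h^{(2k+1,k+1)}$ reproduces by convolution every polynomial of degree $\leq 2k$. Expanding $u$ around the evaluation point $x\in\Omega_0$ by Taylor's theorem through order $2k$, the reproduction property annihilates the polynomial part, leaving only the remainder of order $(2k+1)$ integrated against a compactly supported kernel of $L^1$-norm bounded uniformly in $h$. Since $\Omega_0 + \mathrm{supp}(K_h^{(2k+1,k+1)}) \subset\subset \Omega_1 \subset \Omega$, the smoothness of $u$ on a neighborhood ensures a uniform bound on the relevant derivatives, and we obtain $\|u - K_h^{(2k+1,k+1)}\star u\|_{0,\Omega_0}\leq Ch^{2k+1}$.

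The second, post-processing, term is where the superconvergence of the DG method enters. Applying Lemma~\ref{lmm-bramble} on the pair $\Omega_0\subset\subset\Omega_1$ with $s=k+1$ gives
\[
\bigl\|K_h^{(2k+1,k+1)}\star (u-u_h)\bigr\|_{0,\Omega_0} \leq C\sum_{|\alpha|\leq k+1}\bigl\|D^\alpha\bigl(K_h^{(2k+1,k+1)}\star (u-u_h)\bigr)\bigr\|_{-(k+1),\Omega_1}.
\]
By Property~\ref{prop-divided}, each derivative converts into a convolution against the reduced-smoothness filter $\widetilde K_h^{(2k+1,k+1-\alpha,\alpha)}$ acting on the divided difference $\partial_h^\alpha(u-u_h)$. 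Standard duality for the negative norm, combined with a Young-type estimate, then yields
\[
\bigl\|\widetilde K_h^{(2k+1,k+1-\alpha,\alpha)}\star \partial_h^\alpha(u-u_h)\bigr\|_{-(k+1),\Omega_1} \leq C\,\bigl\|\widetilde K_h\bigr\|_{L^1}\,\bigl\|\partial_h^\alpha(u-u_h)\bigr\|_{-(k+1),\Omega},
\]
where $\|\widetilde K_h\|_{L^1}$ is independent of $h$ by the scaling invariance of the $L^1$ norm. Theorem~\ref{thm-0} supplies $\|\partial_h^\alpha(u-u_h)\|_{-(k+1),\Omega}\leq Ch^{2k+1}$ for every $\alpha\leq k+1$, and summing over $\alpha$ finishes this piece.

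The main obstacle I expect is the careful bookkeeping of domains in the convolution/negative-norm interaction. Moving a test function $\phi\in\mathcal C_0^\infty(\Omega_1)$ from inside the convolution $(\widetilde K_h\star v,\phi)$ to outside produces $(v,\widetilde K_h(-\cdot)\star\phi)$, whose support is enlarged by $\mathrm{supp}(K_h^{(2k+1,k+1)})$. The hypothesis $\Omega_0 + 2\,\mathrm{supp}(K_h^{(2k+1,k+1)}) \subset\subset \Omega_1 \subset\Omega$ is precisely what guarantees that the enlarged test function remains supported in $\Omega$, so that the negative-norm bound of Theorem~\ref{thm-0} on $\Omega$ is applicable. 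Verifying this domain inclusion at each step, together with controlling $H^{k+1}$ norms of the shifted test function uniformly in $h$, is the delicate part; once it is in place, the two bounds combine to give the claimed $Ch^{2k+1}$ estimate.
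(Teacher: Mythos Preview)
Your proposal is correct and follows essentially the same approach as the paper. The paper does not prove Theorem~\ref{thm-1} directly (it is cited from Cockburn et al.), but its sketch proof of the general version, Theorem~\ref{thm:general}, uses exactly your decomposition: the triangle inequality splits the error into the filter-consistency term (handled by Property~\ref{prop:reprod} plus a Taylor expansion) and the approximation term (handled by Lemma~\ref{lmm-bramble}, then Property~\ref{prop-divided}, then Theorem~\ref{thm-0}). Your write-up is in fact more careful than the paper's sketch---you correctly carry the negative norm $-(k+1)$ through the Bramble--Schatz step and track the domain enlargement that justifies the hypothesis $\Omega_0 + 2\,\mathrm{supp}(K_h)\subset\subset\Omega_1$, whereas the paper's sketch glosses over both points.
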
    

Furthermore, besides Properties~\ref{prop-divided} and~\ref{prop:reprod}, we summarize other features of the SIAC filter $K^{(2k+1,k+1)}$ that will be discussed in the rest of the paper:
\begin{itemize}
    \item Compact support, the support size is $3k+1$;
    \item Symmetry with respect to the filtered point ($x=0$);
    \item The filter is a $\mathcal{C}^{k-1}$ function and therefore so is the filtered solution $u_h^\star$.
\end{itemize}

\section{The Basis Function of SIAC Filter}
\label{sec:basis}
In the previous section, we reviewed the SIAC filter's structure - a linear combination of central B-splines.  Based on this B-spline based structure, many generalizations of SIAC filtering have been proposed, such as~\cite{Ji:2013, Li:2016, Li:2015, Ryan:2015, Ryan:2009}.  However, the SIAC filter development scope has never gone beyond the usage of spline functions (mostly B-splines).  Naturally, there is an interesting question: are there other basis functions for the SIAC filter that still allow one to extract the superconvergence order of $2k+1$ from DG solutions, or what is the generic structure of the basis functions for an accuracy-enhancing filter? 

To answer this question, we first recall the necessary components to extract the superconvergence order of $2k+1$, Theorem~\ref{thm-1}.  By checking the proof of Theorem~\ref{thm-1} given in~\cite{Cockburn:2003}, one can see that there are four main components behind Theorem~\ref{thm-1}, namely, Theorem~\ref{thm-0}, Lemma~\ref{lmm-bramble}, Property~\ref{prop-divided} and Property~\ref{prop:reprod}.  Among these four components, Theorem~\ref{thm-1} is the superconvergence property of the DG method, and Lemma~\ref{lmm-bramble} is the property of Hilbert space.  Therefore, to construct an accuracy-enhancing filter, we need to focus on the rest two components that
\begin{itemize}
    \item Property~\ref{prop-divided} allows us to express the filter's derivatives in terms of divided difference quotients.  It is from the structure of the basis function, more precisely, the central B-splines (Property~\ref{prop-centralspline}).
    \item Property~\ref{prop:reprod} demonstrates that the filter can reproduce polynomials by convolution.  It is due to equation~\eqref{eq:reprod}.
\end{itemize}
Through the above analysis, we know the filter's fundamental structure is to provide the ability to express its derivatives in terms of divided differences. Therefore, the first task is to find a basis function $\phi^{(\ell)}$, such that
\begin{equation}
    \label{eq:divided} 
    \frac{d}{dx}\phi^{(\ell)}_h(x) = \partial_h\phi^{(\ell-1)}(x) =\frac{1}{h}\left(\phi^{(\ell-1)}\left(x + \frac{h}{2}\right) - \phi^{(\ell-1)}\left(x - \frac{h}{2}\right)\right).
\end{equation}
To construct a series basis functions $\left\{\phi^{(k)}\right\}$ satisfy relation~\eqref{eq:divided} , we take the Fourier transform of equation~\eqref{eq:divided}
\[
    2\pi i\xi \hat{\phi}^{(\ell)} = \hat{\phi}^{(\ell-1)}e^{\pi i \xi} - \hat{\phi}^{(\ell-1)}e^{-\pi i \xi}.
\]
The above Fourier form leads to
\begin{equation}
    \label{eq:fourier}
    \hat{\phi}^{(\ell)}  = \hat{\phi}^{(\ell-1)}\frac{\sin(\pi\xi)}{\pi\xi}.
\end{equation}
Also, it is easy to verify that the Fourier transform of the characteristic function $\hat{\chi}_{[-1/2,1/2]}$ satisfies
\[
    \hat{\chi}_{[-1/2,1/2]} = \frac{\sin(\pi\xi)}{\pi\xi}.
\]
Therefore, equation~\eqref{eq:fourier} becomes
\[
    \hat{\phi}^{(\ell)}  = \hat{\phi}^{(\ell-1)}\hat{\chi}_{[-1/2,1/2]}.
\]
By taking the inverse Fourier transform of the above equation, we have
\begin{equation}
    \phi^{(\ell)} = \phi^{(\ell-1)}\star \chi_{[-1/2,1/2]}.
\end{equation}
That is, once the initial basis function $\phi^{(1)}$ is chosen, one can construct a series basis functions $\left\{\phi^{(k)}\right\}$ recursively
\begin{equation}
    \label{eq:recursion}
    \begin{split}
      \phi^{(1)} & = \text{initial basis function}, \\
      \phi^{(\ell+1)}(x) & =  \phi^{(\ell)}\star \chi_{[-1/2,1/2]},\,\, \text{for~} \ell \geq 1.
    \end{split}
\end{equation}
Once the basis functions are decided, we can construct a generic accuracy-enhancing filter, that is 
\begin{equation}
    \label{eq:general_filter}
    K^{(2k+1,k+1)} = \sum\limits_{\gamma=0}^{2k} c_\gamma^{(2k+1,k+1)}\phi^{(k+1)}(x - x_\gamma),
\end{equation}
where $c_\gamma^{(2k+1,k+1)}$ are decided by requiring the filter satisfies Property~\ref{prop:reprod} the same way as the original SIAC filter~\eqref{eq:filter}.  That is, the generic accuracy-enhancing filter also needs the ability to reproduce polynomials by convolution until a degree of up to $2k$.  To show the existence and uniqueness of such coefficients $c^{(2k+1,k+1)}_\gamma$, we present the following Theorem~\ref{thm:coeff} with a more general assumption.

\begin{thm}
    \label{thm:coeff}
    Assume $\phi_\gamma,\,\gamma = 0,\ldots,r$ are $r+1$ linear independent functions, each has a compact support and satisfies $\int_{-\infty}^{\infty} \phi_\gamma(\xi)\,d\xi \neq 0$.  Then the linear system 
    \begin{equation}
        \label{eq:repo}
        \sum\limits_{\gamma = 0}^r c_\gamma \int_{-\infty}^\infty \phi_\gamma(\xi)(x - \xi)^m d\xi = x^m,\quad m = 0,1,\ldots,r
    \end{equation}
has a unique solution.
\end{thm}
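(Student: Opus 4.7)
The plan is to reduce the functional identity (\ref{eq:repo}) to a square linear system in the unknowns $c_\gamma$ and then to show that the resulting moment matrix is nonsingular.

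First, for each $m = 0, \ldots, r$, I would insert the binomial expansion $(x-\xi)^m = \sum_{j=0}^{m}\binom{m}{j}x^{m-j}(-\xi)^j$ into the $m$-th equation of (\ref{eq:repo}) and interchange sum and integral (justified by the compact support of each $\phi_\gamma$). Writing $M_s^\gamma := \int_{-\infty}^{\infty}\phi_\gamma(\xi)\xi^s\,d\xi$ for the $s$-th moment, this turns the $m$-th equation into the polynomial identity in $x$
\[
\sum_{j=0}^{m}\binom{m}{j}(-1)^j\,x^{m-j}\sum_{\gamma=0}^{r}c_\gamma M_j^\gamma \;=\; x^m.
\]
Matching coefficients of each power of $x$ (or equivalently evaluating at $x=0$ and letting $m$ run from $0$ to $r$) shows that the whole collection (\ref{eq:repo}) is equivalent to the single $(r+1)\times(r+1)$ linear system $Ac = e_0$, where $A_{s\gamma} = M_s^\gamma$ and $e_0 = (1, 0, \ldots, 0)^T$; the reverse implication is recovered by plugging the moment relations back into the binomial expansion. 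Existence and uniqueness of $c$ is therefore equivalent to the invertibility of the moment matrix $A$.

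To prove $A$ is nonsingular I would show that its kernel is trivial. Assume $Ac = 0$ and set $\Phi := \sum_\gamma c_\gamma \phi_\gamma$; then $\int \Phi(\xi)\xi^s\,d\xi = 0$ for every $s = 0, \ldots, r$, and by the linear independence of the $\phi_\gamma$ it suffices to conclude $\Phi \equiv 0$. In the SIAC setting that motivates the theorem, the basis has the translate structure $\phi_\gamma(\xi) = \phi^{(k+1)}(\xi - x_\gamma)$ with distinct nodes $x_\gamma = -k+\gamma$, and this concluding step is transparent: expanding $(u+x_\gamma)^s$ gives $M_s^\gamma = \sum_{j=0}^{s}\binom{s}{j} x_\gamma^{s-j} m_j$ with $m_j := \int \phi^{(k+1)}(u) u^j\,du$ and $m_0 \neq 0$, so $A = TV$ where $T$ is lower triangular with constant diagonal $m_0$ and $V_{i\gamma} = x_\gamma^i$ is a Vandermonde matrix in distinct nodes; hence $\det A = m_0^{\,r+1}\prod_{\gamma < \delta}(x_\delta - x_\gamma) \neq 0$.

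The main obstacle is this last step at the stated level of generality. Linear independence of the $\phi_\gamma$ combined with $\int \phi_\gamma \neq 0$ controls only the total mass of each basis function and does not by itself preclude a nontrivial combination $\Phi$ from being orthogonal to $1, \xi, \ldots, \xi^r$ on its support, so an additional structural input is needed to force $\Phi \equiv 0$. The route I would adopt is to carry out the Vandermonde factorisation above whenever the $\phi_\gamma$ share a common centre structure (translates of one or more fixed kernels with nonzero mass), which covers the filter (\ref{eq:general_filter}) the theorem is designed for; and to note that for fully arbitrary linearly independent compactly supported $\phi_\gamma$ one should supplement the hypothesis with the requirement that no nonzero element of $\mathrm{span}\{\phi_\gamma\}$ is orthogonal to every monomial of degree $\leq r$, under which the kernel argument in the previous paragraph closes immediately.
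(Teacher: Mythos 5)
Your proposal is correct and takes a genuinely different --- and more careful --- route than the paper. The paper normalizes $\int\phi_\gamma\,d\xi = 1$, writes $A(\gamma,m)(x)=\int\phi_\gamma(\xi)(x-\xi)^m\,d\xi = x^m+\sum_{j<m}\lambda^j_{\gamma,m}x^j$, and then shows that a relation $\sum_{m} b_m A(\gamma,m)(x)=0$ forces $b_r=b_{r-1}=\cdots=b_0=0$ by the triangular structure in $m$. That establishes linear independence of the polynomials $\{A(\gamma,m)\}_{m=0}^{r}$ for each fixed $\gamma$, i.e.\ independence over the index $m$; but the unknowns in \eqref{eq:repo} are the $c_\gamma$, combined over $\gamma$, so this is not the statement needed for either existence or uniqueness. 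Your reduction to the square moment system $Ac=e_0$ with $A_{s\gamma}=\int\phi_\gamma(\xi)\xi^s\,d\xi$ isolates the correct question (invertibility of the moment matrix), and your factorization $A=TV$ with $T$ lower triangular of diagonal $m_0\neq 0$ and $V$ a Vandermonde matrix in distinct nodes settles it completely for the translate structure $\phi_\gamma(\cdot)=\phi^{(k+1)}(\cdot-x_\gamma)$, which is exactly the case the filter \eqref{eq:general_filter} requires.

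Your reservation about the fully general statement is well founded, and the paper's argument does not close it either: the hypotheses ``linearly independent, compactly supported, $\int\phi_\gamma\,d\xi\neq 0$'' are not sufficient. For instance, with $r=1$, the functions $\phi_0=\chi_{[0,1]}$ and $\phi_1(\xi)=\left(\tfrac{7}{6}-\xi+\xi^2\right)\chi_{[0,1]}(\xi)$ are linearly independent, compactly supported, and both have unit mass, yet both also have first moment $\tfrac12$; the $m=1$ equation of \eqref{eq:repo} then reads $(c_0+c_1)\left(x-\tfrac12\right)=x$, which has no solution. So the supplementary condition you identify --- that no nonzero element of $\mathrm{span}\{\phi_\gamma\}$ annihilates all monomials of degree at most $r$ --- is genuinely needed, is exactly equivalent to the invertibility of $A$, and holds automatically for translates of a single kernel of nonzero mass at distinct centres. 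Your treatment, restricted to that translate setting (or stated with the extra hypothesis), is the version of the theorem that is actually true and actually used.
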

\begin{proof}
    Without loss of generality, we assume $\int_{-\infty}^{\infty} \phi_\gamma(\xi)\,d\xi=1$, and denote  
    \[ 
        \begin{split}
            A(\gamma, m)(x) & = \int_{-\infty}^\infty \phi_\gamma(\xi)(x - \xi)^m d\xi,\quad \gamma, m = 0,1,\ldots, r, \\
                            & = x^m + \sum\limits_{j=0}^{m-1}\lambda_{\gamma,m}^j x^j,
        \end{split}
    \]
    where $\lambda_{\gamma,m}^j = \int_{-\infty}^\infty \phi_\gamma(\xi)\left(-\xi\right)^j \binom{m}{j} d\xi$.  Then, the conclusion in Theorem \ref{thm:coeff} is equivalent to matrix $A(\gamma, m)$ is non-singular, in other words, we only need to prove the $r+1$ rows $\left\{A(\gamma, m)\right\}^r_{\gamma=0}$ are linear independent.

    Assume there exist constants $b_m, m = 0,\ldots, r$ such that
    \[ 
        \sum\limits_{m = 0}^r b_m A(\gamma,m)(x) = 0, \quad \gamma = 0,\ldots, r.
    \]
    By substituting the form of $A(\gamma,m)$, we have 
    \[
        \sum\limits_{m = 0}^r b_m \left(x^m + \sum\limits_{j = 0}^{m-1}\lambda_{\gamma,m}^j x^j\right) = 0 \, \Rightarrow \, \sum\limits_{m=0}^r\left(b_m + \sum\limits_{j=m+1}^r b_j \lambda_{\gamma,i}^m\right) x^m = 0. 
    \]
    Since the above relation is valid for all $x \in \mathbb{R}$, we have
    \[
        b_m + \sum\limits_{j=m+1}^r b_j \lambda_{\gamma,i}^m = 0 \, \Rightarrow \, \left\{\begin{array}{l}
                        b_r = 0, \\
                        b_{r-1} = -\lambda_{\gamma,r}^{r-1} b_r = 0, \\
                        \ldots \\
                        b_0 = -\left(\sum\limits_{j=1}^r \lambda_{\gamma,j}^0 b_j\right) = 0.
                    \end{array}\right.
    \]
    It follows that the $r+1$ rows of linear system~\eqref{eq:reprod} is linear independent. 
\end{proof}
Since $\phi^{(k+1)}(x - x_\gamma), \gamma = 0,\ldots,2k$, are $2k+1$ linear independent compact functions, Theorem~\ref{thm:coeff} indicates that there exists unique constants $c^{(2k+1,k+1)}_\gamma$ such that filter~\eqref{eq:general_filter} reproduces polynomials by convolution up to degree $2k$.

Finally, we find a way to construct a general basis function for the accuracy-enhancing filter~\eqref{eq:general_filter}, which also satisfies Property~\ref{prop-divided} and Property~\ref{prop:reprod}.  We know that there are infinitely many basis functions for the SIAC filter that allow us to extract the superconvergence order of $2k+1$ from the DG solutions.  
\begin{thm}
    \label{thm:general}
    Under the same conditions in Theorem~\ref{thm-1}, and the filter given in~\eqref{eq:general_filter}, we have 
    \[ 
        \|u - K_h^{(2k+1,k+1)}\star u_h\|_{0,\Omega_0} \leq C h^{2k+1}.
    \]
\end{thm}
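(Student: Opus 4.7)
The plan is to mirror the Cockburn--Ryan--Shu proof of Theorem~\ref{thm-1}, replacing each appeal to Property~\ref{prop-centralspline} (specific to central B-splines) by the corresponding identity derived in this section for the generic basis $\phi^{(k+1)}$. I would begin with the splitting
\begin{equation*}
u - K_h^{(2k+1,k+1)}\star u_h
= \bigl(u - K_h^{(2k+1,k+1)}\star u\bigr) + K_h^{(2k+1,k+1)}\star (u - u_h),
\end{equation*}
and estimate the two pieces separately on $\Omega_0$. For the first (filter--consistency) term, the polynomial-reproduction property holds for the generic filter~\eqref{eq:general_filter} because its coefficients are fixed exactly by~\eqref{eq:repo}, whose unique solvability is Theorem~\ref{thm:coeff}. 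A Taylor expansion of $u$ to degree $2k$ therefore cancels through the convolution and leaves a remainder of order $h^{2k+1}$, giving $\|u - K_h^{(2k+1,k+1)}\star u\|_{0,\Omega_0}\le C\, h^{2k+1}\|u\|_{2k+1,\Omega_1}$, the constant absorbing the $(2k{+}1)$-th moment of the compactly supported $\phi^{(k+1)}$.

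For the second term I would apply Lemma~\ref{lmm-bramble} with $s = k+1$ to reduce the $L^2$ estimate on $\Omega_0$ to a sum of negative-norm bounds on $\Omega_1$:
\begin{equation*}
\bigl\|K_h^{(2k+1,k+1)}\star (u-u_h)\bigr\|_{0,\Omega_0}
\le C \sum_{|\alpha|\le k+1} \bigl\|D^\alpha\bigl(K_h^{(2k+1,k+1)}\star (u-u_h)\bigr)\bigr\|_{-(k+1),\Omega_1}.
\end{equation*}
The decisive new input is that, by the recursion~\eqref{eq:recursion}, the generic basis satisfies $\frac{d}{dx}\phi^{(\ell)}_h = \partial_h \phi^{(\ell-1)}_h$, i.e.\ precisely the identity that underlies Property~\ref{prop-centralspline} for B-splines. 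Iterating $\alpha$ times transfers derivatives from the filter onto the DG error as divided differences, so Property~\ref{prop-divided} continues to hold verbatim with $\psi$ replaced by $\phi$ and
\begin{equation*}
D^\alpha\bigl(K_h^{(2k+1,k+1)}\star (u-u_h)\bigr) = \tilde K_h^{(2k+1,k+1-\alpha,\alpha)}\star \partial_h^\alpha(u - u_h).
\end{equation*}
The standard convolution estimate $\|K\star g\|_{-s}\le \|K\|_{L^1}\|g\|_{-s}$ combined with the DG negative-norm superconvergence from Theorem~\ref{thm-0} then yields a bound of order $h^{2k+1}$ for each summand, completing the proof once combined with the consistency estimate.

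The step I expect to be most delicate is controlling $\tilde K_h^{(2k+1,k+1-\alpha,\alpha)}$ uniformly in $h$ across all admissible $\alpha$. One must verify that every intermediate basis $\phi^{(k+1-\alpha)}$ generated by~\eqref{eq:recursion} retains compact support (enlarging by only a fixed amount per recursive step) and a uniformly bounded $L^1$-norm; both follow from the hypotheses $\phi^{(1)}\in L^1$ with compact support and $\int\phi^{(1)}\neq 0$ used in Theorem~\ref{thm:coeff}, so that the $L^1$-norm of the $h$-scaled filter is independent of $h$, exactly as for central B-splines. Once this uniform control is in hand, together with the nesting condition $\Omega_0 + 2\,\mathrm{supp}(K_h^{(2k+1,k+1)}) \subset\subset \Omega_1$ borrowed from Theorem~\ref{thm-1}, the classical post-processing machinery carries over without further modification.
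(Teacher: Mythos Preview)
Your proposal is correct and follows essentially the same approach as the paper's own (sketch) proof: the same triangle-inequality splitting, the Taylor-expansion/polynomial-reproduction argument for the consistency term, and the Bramble--Schatz lemma combined with Property~\ref{prop-divided} and Theorem~\ref{thm-0} for the approximation term. If anything, you are more careful than the paper --- you correctly invoke the negative norm after Lemma~\ref{lmm-bramble} and explicitly isolate the uniform $L^1$ control of $\tilde K_h^{(2k+1,k+1-\alpha,\alpha)}$, which the paper's sketch suppresses.
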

\begin{proof}
    Here, we only give a sketch proof of the error estimate of the filtered solution, and one can refer the proof of Theorem~\ref{thm-1} for more details.  
    
    We first isolate components of the error estimate by using the triangle inequality.  That is,
    \[
        \left\|u-K^{(2k+1,k+1)}_h\star u_h\right\|_0\leq\underbrace{\left\|u-K^{(2k+1,k+1)}_h\star u\right\|_{0}}_{\text{filter construction}}+\underbrace{\left\|K^{(2k+1,k+1 )}_h\star(u-u_h)\right\|_{0}}_{\text{approximation error}}
    \]
    The first term is solely determined from Property~\ref{prop:reprod} of the filter.  By taking the Taylor polynomial of $u$ and Property~\ref{prop:reprod}, we can prove 
    \[
        \left\|u-K^{(2k+1,k+1)}_h\star u\right\|_{0} \leq C h^{2k+1}.
    \]
    The second term relies on the approximation error, by Lemma~\ref{lmm-bramble}, 
    \[
        \left\|K^{(2k+1,k+1)}_h\star(u-u_h)\right\|_{0} \leq C\sum\limits_{\alpha \leq k+1}\left\|D^\alpha\left(K^{(2k+1,k+1)}_h\star(u-u_h)\right)\right\|_0.
    \]
    Due to the recursion relation of the basis function, the general accuracy-enhancing filter~\eqref{eq:general_filter} also satisfies Property~\ref{prop-divided}, then
    \[
        \left\|D^\alpha\left(K^{(2k+1,k+1)}_h\star(u-u_h)\right)\right\|_0 = \left\|\left(\tilde{K}^{(2k+1,k+1-\alpha,\alpha)}_h\star \partial_h^\alpha (u-u_h)\right)\right\|_0.
    \]
    By combining with the superconvergence property of the DG method (Theorem~\ref{thm-0}), we have
    \[
        \left\|K^{(2k+1,k+1)}_h\star(u-u_h)\right\|_{0} \leq C h^{2k+1}.
    \]
    That is, 
    \[ 
        \left\|u - K_h^{(2k+1,k+1)}\star u_h\right\|_{0} \leq C h^{2k+1}.
    \]
\end{proof}

\subsection{The Raised Cosine Basis}
In the previous section, we demonstrate that once the initial basis function $\phi^{(1)}$ is decided, the filter $K^{(2k+1,k+1)}$ constructed by~\eqref{eq:general_filter} with the generic basis function $\phi^{(k+1)}$~\eqref{eq:recursion}.  We also proved that this filter still preserves the ability of the central B-spline filter that to raise the convergence rate of the DG solution from order $k+1$ to $2k+1$ (Theorem~\ref{thm:general}).  We note that the natural choice is $\phi^{(1)} = \chi_{[-1/2,1/2]}$, which leads to the central B-spline and the original SIAC filter. If we limit the SIAC filter to be a piecewise polynomial, then the central B-spline filter has the simplest formula (or lowest degree).  However, there are many choices with different features, such as the filters used in the spectral method~\cite{Vandeven:1991}, the Dirac-Delta filter used in~\cite{Bohm:2019, Wissink:2018}, etc.  

In this section, we show an example of using a different initial basis function to construct the accuracy-enhancing filter $K^{(2k+1,k+1)}$.  We present a numerical verification to show the generic accuracy-enhancing filter can also extract the superconvergence order of $2k+1$ from the DG solution.  

In this example, we construct a series basis functions start with a raised cosine filter for the spectral method in~\cite{Vandeven:1991}, that 
\begin{equation}
    \label{eq:raised_cosine}
    \phi^{(1)} = \left\{
            \begin{array}{cc}
                \frac{1}{2}\left(1+\cos(2\pi x)\right)  , & x \in [-\frac{1}{2},\frac{1}{2}], \\
                0, & \text{otherwise}.
            \end{array}\right.
\end{equation}
Then, we construct the rest basis functions recursively by~\eqref{eq:recursion}.  Here, for a better comparison, we define the initial basis function $\phi^{(1)}$ on the interval $\left[-\frac{1}{2},\frac{1}{2}\right]$ same as the first order central B-spline $\psi^{(1)}$ (or $\chi_{[-\frac{1}{2},\frac{1}{2}]}$).  For general situation, it only requires the initial basis function has a compact support.  We give the analytic formula in~\eqref{eq:raised_cosine2} and plots in Figure~\ref{figure:raised_cosine} of the raised cosine basis functions $\phi^{(k+1)}$ with $k=1,2,3$.
\begin{equation}
    \label{eq:raised_cosine2}
  \begin{split}
    \phi^{(2)}(x) & = \left\{\begin{array}{ll}
        \frac{1}{2}(1 + x) -\frac{1}{4\pi}\sin(2\pi x), & x \in [-1,0), \\
        \frac{1}{2}(1 - x) +\frac{1}{4\pi}\sin(2\pi x), & x \in [0, 1], \\
        0 & \text{otherwise};
    \end{array} \right. \\
    \phi^{(3)}(x) & = \left\{\begin{array}{ll}
        \frac{1}{16}(2x+3)^2 - \frac{1}{8\pi^2}\left(1+\cos(2\pi x)\right), & x \in [-\frac{3}{2},-\frac{1}{2}), \\
        \frac{1}{8}(-4x^2 + 3) + \frac{1}{4\pi^2}\left(1+\cos(2\pi x)\right), & x \in [-\frac{1}{2}, \frac{1}{2}), \\
        \frac{1}{16}(2x-3)^2 -  \frac{1}{8\pi^2}\left(1+\cos(2\pi x)\right), & x \in [\frac{1}{2},-\frac{3}{2}], \\
        0 & \text{otherwise};
    \end{array} \right. \\
    \phi^{(4)}(x) & = \left\{\begin{array}{ll}
        \frac{1}{12}(x+2)^3 + \frac{1}{16\pi^3}\left(-2\pi(x+2)+\sin(2\pi x)\right),        & x \in [-2,-1), \\
        \frac{1}{12}(-3x^3-6x^2+4) + \frac{1}{16\pi^3}\left(2\pi(3x+2)-3\sin(2\pi x)\right), & x \in [-1, 0), \\
        \frac{1}{12}( 3x^3-6x^2+4) + \frac{1}{16\pi^3}\left(2\pi(-3x+2)+3\sin(2\pi x)\right), & x \in [ 0, 1), \\
        \frac{1}{12}(2-x)^3 + \frac{1}{16\pi^3}\left(2\pi(x-2)-\sin(2\pi x)\right),        & x \in [ 1, 2], \\
        0 & \text{otherwise}.
    \end{array} \right.
  \end{split}
\end{equation}

\begin{figure}[!ht]
    \centering
    \begin{minipage}[c]{0.32\textwidth}
        \centerline{$\quad k = 1$}
    \end{minipage}
    \begin{minipage}[c]{0.32\textwidth}
        \centerline{$\quad k = 2$}
    \end{minipage}
    \begin{minipage}[c]{0.32\textwidth}
        \centerline{$\quad k = 3$}
    \end{minipage}

    \begin{minipage}[c]{0.32\textwidth}
        \includegraphics[width=1.\textwidth]{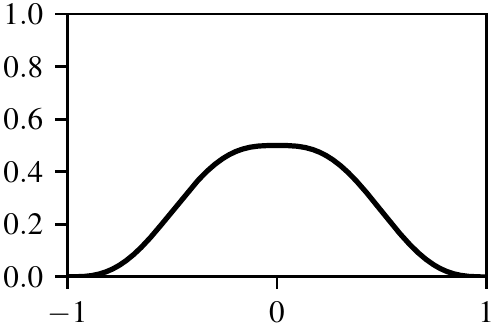}
    \end{minipage}
    \begin{minipage}[c]{0.32\textwidth}
        \includegraphics[width=1.\textwidth]{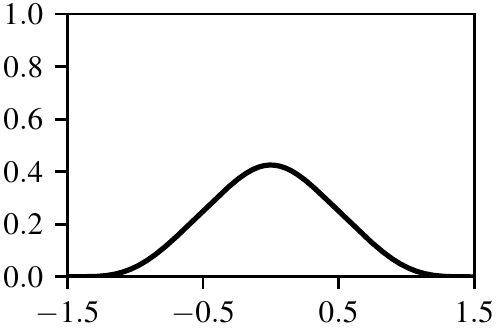}
    \end{minipage}
    \begin{minipage}[c]{0.32\textwidth}
        \includegraphics[width=1.\textwidth]{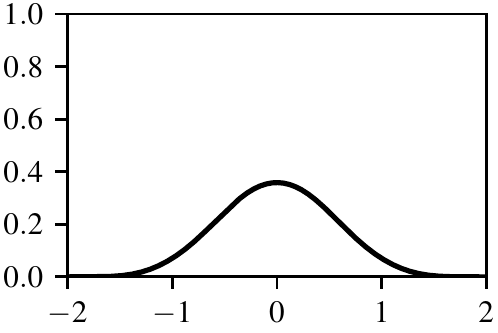}
    \end{minipage}

    \begin{center}
    \caption{\label{figure:raised_cosine}
        The raised cosine basis function $\phi^{(k+1)}$ with $k=1,2,3$ in~\eqref{eq:raised_cosine2}.  The initial basis function $\phi^{(1)}$ chosen as the raised cosine filter~\eqref{eq:raised_cosine}. 
}
     \end{center}
\end{figure}
  
With the raised cosine basis functions, we can compute the linear combination coefficients $c^{(2k+1,k+1)}_\gamma$ by~\eqref{eq:reprod}, and construct the filter $K^{(2k+1,k+1)}$ in~\eqref{eq:general_filter}.  In Figure~\ref{figure:raised_cosine_filter}, we present this new raised cosine filter $K^{(2k+1,k+1)}$ with $k=1, 2, 3$.    Compared to the $k+1$ order central B-spline filter which is a $\mathcal{C}^{k-1}$ function, the $k+1$ order raised cosine filter is a $\mathcal{C}^{k+1}$ function.  We can observe this improvement of smoothness by comparing Figure~\ref{figure-filter} and Figure~\ref{figure:raised_cosine_filter}.  This extra smoothness is especially important when the SIAC filtering is used for visualization, shock regularization, etc.

\begin{rem}
As mentioned earlier, the central B-spline filtering may have the most straightforward formula.  However, the general construction idea of basis functions paves the way for designing new SIAC filters with proven same $2k+1$ superconvergence property and more complicated features.  For example, a DG solution's inter-element discontinuity may hinder its utility in various applications, such as visualization.  For this purpose, one can design a more smooth filter (like the raised cosine filter) to increase the DG solution's inter-element smoothness further.  If needed, one can even create a $\mathcal{C}^\infty_0$ SIAC filter to enhance the smoothness of the filtered solutions, such as choosing the initial basis function as
    \[
        \phi^{(1)} = \left\{
            \begin{array}{cc}
                \exp\left(-\frac{1}{1 - 4x^2}\right), & |x| < \frac{1}{2}, \\
                0, & |x| \geq \frac{1}{2}.
            \end{array}\right.
    \]
Then we can obtain a $\mathcal{C}^{\infty}_0$ filter, which has the same support size as the central B-spline SIAC filter.  It follows that the filtered solution $u^\star_h \in \mathcal{C}^{\infty}$ and also has the accuracy order of $2k+1$. Alternatively, one can use the idea of the Dirac-Delta filter in~\cite{Bohm:2019, Wissink:2018} to design a polynomial based filter with specifically required smoothness property.
\end{rem}

\begin{rem}
We note that for the multi-dimension SIAC filter, it is usually the tensor-product of the one dimension filter, see the two-dimension example in Section~\ref{sec:2d}.  However, an interesting direction of future research is that we can design new multi-dimension SIAC filters beyond the tensor-product assumption.  For example, one can choose a non-separable initial basis function to construct non-separable multi-dimension SIAC filters.  Also, the support of the SIAC can be any shape (instead of a rectangle shape, see Figure~\ref{figure:support_filter2d}) for DG solutions on generic polygon meshes, like the Hexagonal SIAC filter for a hexagonal mesh structure~\cite{Mirzargar:2017}. 
\end{rem}

\begin{figure}[!ht]
    \centering
    \begin{minipage}[c]{0.32\textwidth}
        \centerline{$\quad k = 1$}
    \end{minipage}
    \begin{minipage}[c]{0.32\textwidth}
        \centerline{$\quad k = 2$}
    \end{minipage}
    \begin{minipage}[c]{0.32\textwidth}
        \centerline{$\quad k = 3$}
    \end{minipage}

    \begin{minipage}[c]{0.32\textwidth}
        \includegraphics[width=1.\textwidth]{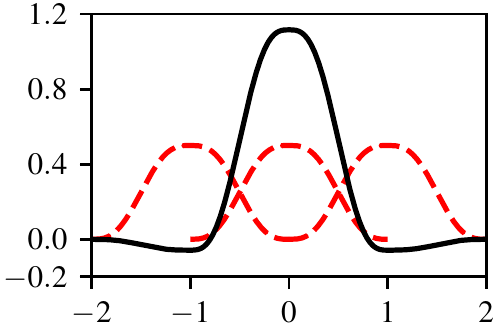}
    \end{minipage}
    \begin{minipage}[c]{0.32\textwidth}
        \includegraphics[width=1.\textwidth]{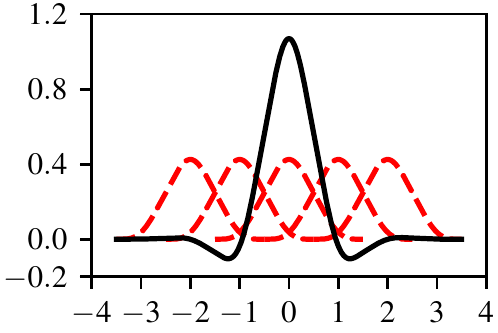}
    \end{minipage}
    \begin{minipage}[c]{0.32\textwidth}
        \includegraphics[width=1.\textwidth]{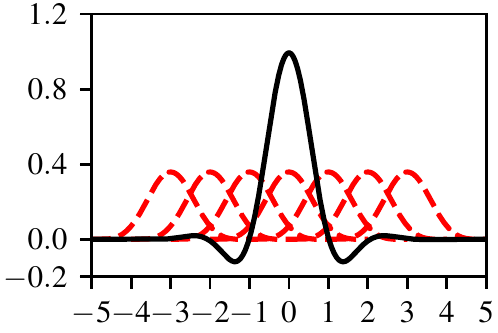}
    \end{minipage}

    \begin{center}
    \caption{\label{figure:raised_cosine_filter}
        The raised cosine filter $K^{(2k+1,k+1)}$ constructed by a raised cosine basis function $\phi^{(k+1)}$ in~\eqref{eq:raised_cosine2}, with $k=1, 2, 3$. 
}
     \end{center}
\end{figure}

\subsubsection{Numerical Verification}
In this part, we illustrate the capabilities of the generic accuracy-enhancing filtering and compare it to the central B-spline SIAC filtering through a numerical example.  Consider the one-dimensional advection equation
\begin{equation}
    \label{eq:advection}
    \begin{split}
        u_t + u_x & = 0, \quad (x,t) \in [0,1]\times (0, T]\\
        u(x,0) & =  \sin(2\pi x)
    \end{split}
\end{equation}
with the final time $T = 1$ and periodic boundary conditions over uniform meshes.  The $L^2$ norm errors and respective accuracy orders are given in Table~\ref{table:general_filter}.  We can see that the DG solution has the regular accuracy order of $k+1$.  For the filtered solutions, we observe that both the central B-spline filtering and the newly constructed raised cosine filtering have the superconvergence order of $2k+1$.  Compared to the original SIAC filtering, the raised cosine filtering leads to a slightly smaller error in the $L^2$ norm.  In Figure~\ref{figure:general_filter}, we show the point-wise error plots before and after applying the central B-spline filtering and the raised cosine filtering.  We note that both filters recover smoothness in the DG solution as well as reduce the error.   

\begin{table}\centering 
\scalebox{1.}{
\begin{tabular}{ccccccccccc}
\toprule
  & & & \multicolumn{2}{c}{DG} &&  \multicolumn{2}{c}{Central B-spline} && \multicolumn{2}{c}{Raised Cosine} \\ 
\cmidrule{4-5} \cmidrule{7-8} \cmidrule{10-11} 
Degree & Elements & & Error & Order && Error & Order && Error & Order \\ 
\midrule
        & 20 && 4.60e-03 &   --  &&  1.97e-03 &   --  && 1.95E-03 &   --\\ 
$k = 1$ & 40 && 1.09e-03 &  2.08 &&  2.44e-04 &  3.02 && 2.42E-04 &  3.01 \\ 
        & 80 && 2.67e-04 &  2.02 &&  3.02e-05 &  3.01 && 3.02E-05 &  3.01 \\ 
        &160 && 6.65e-05 &  2.01 &&  3.76e-06 &  3.01 && 3.76E-06 &  3.00 \\ 
\midrule
        & 20 && 1.07e-04 &   --  &&  4.10e-06 &   --  && 3.42E-06 &   --\\ 
$k = 2$ & 40 && 1.34e-05 &  3.00 &&  9.42e-08 &  5.44 && 8.41E-08 &  5.35\\ 
        & 80 && 1.67e-06 &  3.00 &&  2.40e-09 &  5.30 && 2.32E-09 &  5.18 \\ 
        &160 && 2.09e-07 &  3.00 &&  6.63e-11 &  5.18 && 7.49E-11 &  4.95 \\ 
\midrule
        & 20 && 2.06e-06 &   --  &&  6.97e-08 &   --  && 5.09E-08 &   -- \\ 
$k = 3$ & 40 && 1.29e-07 &  4.00 &&  2.83e-10 &  7.95 && 2.07E-10 &  7.94\\ 
        & 80 && 8.07e-09 &  4.00 &&  1.14e-12 &  7.95 && 8.40E-13 &  7.94\\ 
        &160 && 5.04e-10 &  4.00 &&  4.67e-15 & 7.93 && 3.60E-15 &  7.87\\ 
\bottomrule
\end{tabular}}
\caption{Convergence tests for advection equation~\eqref{eq:advection} for the DG method with the filtering techniques.  Here, the finial time $T = 1$.  The $L^2$ norm error of the DG solution has the accuracy order of $k+1$, and the solutions after filtering with both central B-spine filtering and the raised cosine filtering have the same superconvergence order of $2k+1$.}
\label{table:general_filter}
\end{table}

\begin{figure}[!ht]
    \centering
    \begin{minipage}[c]{0.32\textwidth}
        \centerline{DG}
    \end{minipage}
    \begin{minipage}[c]{0.32\textwidth}
        \centerline{$\quad$Central B-spline}
    \end{minipage}
    \begin{minipage}[c]{0.32\textwidth}
        \centerline{$\qquad$Raised Cosine}
    \end{minipage}

    \begin{minipage}[c]{0.32\textwidth}
        \includegraphics[width=1.\textwidth]{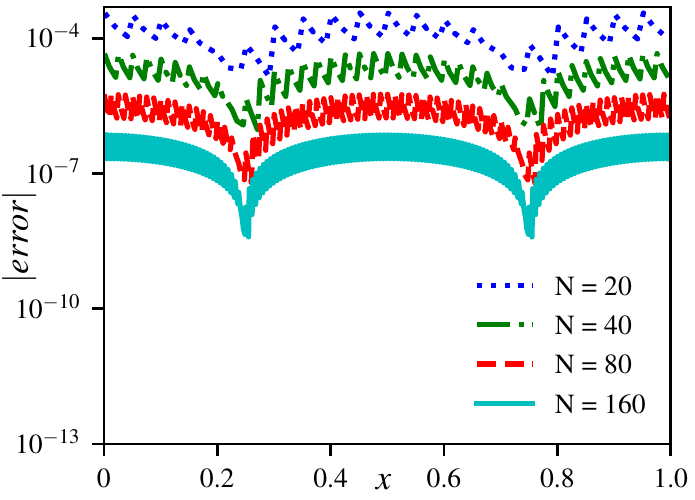}
    \end{minipage}
    \begin{minipage}[c]{0.32\textwidth}
        \includegraphics[width=1.\textwidth]{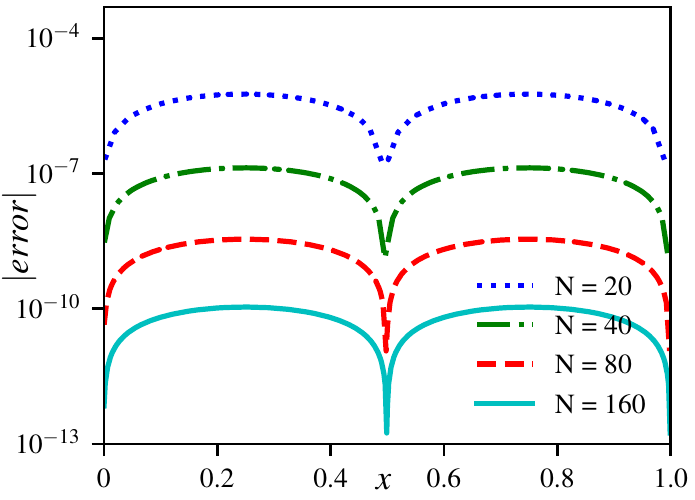}
    \end{minipage}
    \begin{minipage}[c]{0.32\textwidth}
        \includegraphics[width=1.\textwidth]{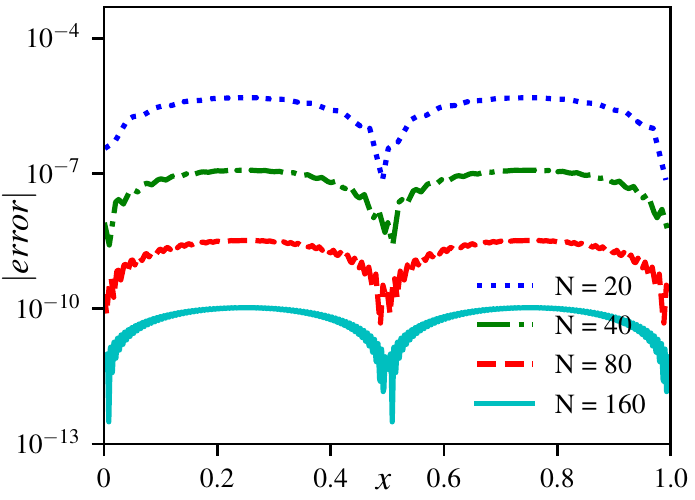}
    \end{minipage}
    \begin{center}
    \caption{\label{figure:general_filter}
        The point-wise error plots for advection equation~\eqref{eq:advection} for the DG method with the filtering techniques with polynomial $\mathbb{P}^2$.  Here, the final time $T = 1$.  We observe that the compared to the DG solution, the solutions after filtering with both central B-spine filtering and the raised cosine filtering have recovered smoothness in the approximation and reduces the error.
}
     \end{center}
\end{figure}

\section{The Distribution of the Basis Functions}
\label{sec:compact}
During the previous discussions, we addressed how to construct the general basis functions for the SIAC filter while still preserving the same accuracy-enhancing ability.  In this section, we treat another often overlooked component when constructing an accuracy-enhancing filter: the basis functions' distribution.  For convenience, in this section, we mainly use the central B-spline as the basis function.  However,  one can naturally extend the conclusion to other basis functions, such as the raised cosine basis functions in~\eqref{eq:raised_cosine2}.        

Recall the formula of the original SIAC filter,
\begin{equation}
    \label{eq:filter1}
    K^{(2k+1,k+1)}(x) = \sum\limits_{\gamma=0}^{2k} c_\gamma^{(2k+1,k+1)}\psi^{(k+1)}(x - x_\gamma),
\end{equation}
where
\[
    x_\gamma = -k+\gamma
\]
presents the central point of the basis function $\psi^{(k+1)}(x-x_\gamma)$.  That is, the basis functions $\left\{\psi^{(k+1)(x-x_\gamma}\right\}_\gamma$ are uniformly distributed with distance $1$.  It leads the support size of the filter $K^{(2k+1,k+1)}$ to be $3k+1$, while the scaled filter $K_H^{(2k+1,k+1)}$ has a support size of $(3k+1)H$.  For the uniform meshes, when we take the scaling $H=h$, the filtering convolution will involve $3k+1$ or $3k+2$ elements during computing. In Table~\ref{tab:filter_support},  we present the relation of the polynomial degree $k$ of the DG approximation, the number of B-splines, and the involved DG elements during the filtering convolution.  Although the filter has compact support, the support size is still quite large, especially for the higher-order case and the multi-dimension case (tensor product of the one-dimensional filter).   A large support size may be problematic when introducing boundary conditions, when the mesh is highly unstructured or when the solution is lack of smoothness, etc.  Also, the support size of the filter is an important factor for the computational cost.  
\begin{table}[!ht]
\begin{center}
\begin{tabular}{ccccc}
\toprule
$k$ & Number B-Splines & $H$ & Number Elements  & Expected Accuracy\\ 
\midrule
1 & 3 & $h$    & 4 $\sim$ 5 & 3 \\
  &   & $0.5h$ & 2 $\sim$ 3 & 2 \\
 \midrule
2 & 5 & $h$    & 7 $\sim$ 8 & 5 \\
  &   & $0.5h$ & 4 $\sim$ 5 & 3 \\
  \midrule
3 & 7 & $h$    & 10 $\sim$ 11 & 7 \\
  &   & $0.5h$ & 5  $\sim$  6 & 4\\
 \bottomrule
\end{tabular}
\caption{\label{tab:filter_support} The choice of typical filter parameters (order, number of B-splines, and the scaling) affect the number of elements for the filtering convolution and possible order of accuracy.  Assume the (uniform) mesh size is $h$.  It shows the filter scaling $H$ equal to the $h$ and $0.5h$.  For a scaling small than $h$ ($H < h)$ only $\mathcal{O}(h^{k+1})$ can be expected.}
\end{center}
\end{table}

It is obvious that one can benefit from reducing the filter's support.  One direct way to do this is to let the filter scaling $H$ smaller than $h$ (for example, see $H = 0.5h$ in Table~\ref{tab:filter_support}).  However, choosing $H < h$ will damage the superconvergence property, and only the regular accuracy order of $k+1$ can be expected in the $L^2$ norm, see~\cite{King:2012}.  Due to the loss of the superconvergence property, the filtered solution usually has worse accuracy than using the scaling $H = h$.

To reduce the filter's support size while still preserves the superconvergence property, in this paper, we propose to change the distribution of the basis functions, namely the $x_\gamma$ given in~\eqref{eq:cpoints}.  In literature, there are many generalizations of the SIAC filtering.  However, the distribution of the basis functions is always an overlooked factor.  The basis functions' distribution has never gone beyond a uniform distribution with distance $1$ since the beginning of this filtering technique.  In the rest of this section, we will discuss using the different distribution for the basis functions and propose a compact SIAC filter that significantly reduces the original SIAC filter's support size.

\subsection{The Compact SIAC Filtering}
We note that the support size ($3k+1$) of the SIAC filter comes from two parts.  The first part is the support size ($k+1$) of the basis function used to construct the filter.  As discussed earlier, it can not be reduced without damaging the accuracy-enhancing ability of the filtering.  The second part is the distribution of the basis functions.  
As mentioned earlier, the original SIAC filter samples the central points $x_\gamma$ of its basis functions as follows
\[
    x_\gamma \in \left\{-k, -(k-1), \ldots, k-1, k\right\}.
\]
This distribution contributes a size of $2k$ out of the total support size of $3k+1$.  For this part, we can sample the basis functions more closely to reduce the support size of the filter.   That is, instead of using $x_\gamma$ given in~\eqref{eq:cpoints}, we compress it with a compress parameter $\epsilon$ ($0 < \epsilon < 1$), 
\begin{equation}
    \label{eq:compact_nodes}
    \epsilon x_\gamma = \epsilon(-k + \gamma).
\end{equation} 
In this way, we can introduce the following compact SIAC filter
\begin{equation}
    \label{eq:compact_filter}
    K^{(2k+1,k+1)}_\epsilon(x) = \sum\limits_{\gamma=0}^{2k} c_\gamma^{(2k+1,k+1)}\psi^{(k+1)}(x - \epsilon x_\gamma).
\end{equation}
The support size of the compact SIAC filter~\eqref{eq:compact_filter} is $(2\epsilon+1)k+1$, which is smaller than the support size of the original SIAC filter ($3k+1$).  For example, we present the compact SIAC filters with $\epsilon = 0.5$ in Figure~\ref{figure:compact_filter} (see the original SIAC filter in Figure~\ref{figure-filter}).  In addition, we denote the scaled compact filter as 
\[
    K^{(2k+1,k+1)}_{\epsilon,H} = \frac{1}{H}K^{(2k+1,k+1)}_\epsilon\left(\frac{x}{H}\right).
\]

\begin{figure}[!ht]
    \centering

    \begin{minipage}[c]{0.32\textwidth}
        \centerline{$\quad k = 1$}
        \includegraphics[width=1.\textwidth]{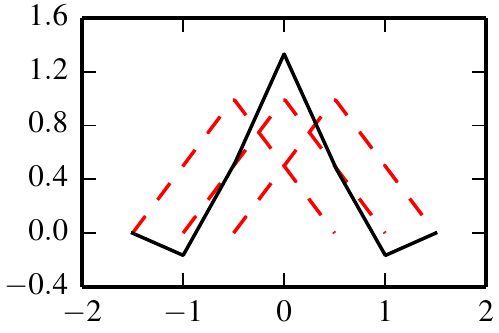}
    \end{minipage}
    \begin{minipage}[c]{0.32\textwidth}
        \centerline{$\quad k = 2$}
        \includegraphics[width=1.\textwidth]{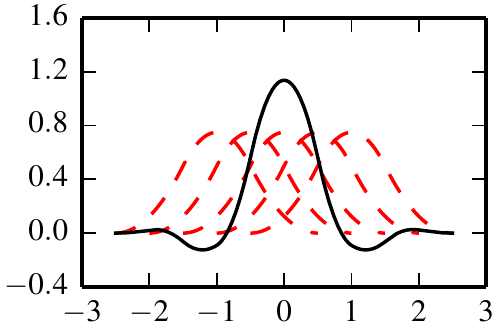}
    \end{minipage}
    \begin{minipage}[c]{0.32\textwidth}
        \centerline{$\quad k = 3$}
        \includegraphics[width=1.\textwidth]{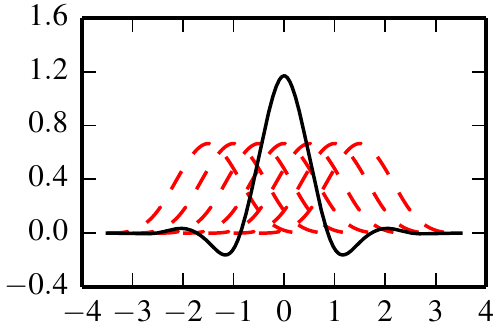}
    \end{minipage}

    \begin{center}
    \caption{\label{figure:compact_filter}
        Solid black lines represent the compact SIAC filter $K^{(2k+1,k+1)}_\epsilon$ given in~\eqref{eq:compact_filter} with $k = 1, 2, 3$, dashed red lines represent the respect central B-splines.  The compress parameter $\epsilon = 0.5$.
}
     \end{center}
\end{figure}

In Table~\ref{tab:compact_filter_support},  we present the relation of the polynomial degree $k$ of the DG approximation, the number of B-splines, the compress parameter $\epsilon$ and the involved DG elements during filtering convolution for the compact SIAC filer (with scaling $H = h$). 

\begin{table}[!ht]
\begin{center}
\begin{tabular}{ccccc}
\toprule
$k$ & Number B-Splines & $\epsilon$ & Number Elements  & Expected Accuracy\\ 
\midrule
1 & 3 & $1/2$    & 3 $\sim$ 4 & 3 \\
 \midrule
2 & 5 & $1/2$    & 5 $\sim$ 6 & 5 \\
  &   & $1/4$    & 4 $\sim$ 5 & 5 \\
  \midrule
3 & 7 & $1/2$    & 7 $\sim$ 8 & 7 \\
  &   & $1/6$    & 5 $\sim$ 6 & 7\\
 \bottomrule
\end{tabular}
\caption{\label{tab:compact_filter_support} The choice of typical filter parameters (order, number of B-splines, and compress parameter) affects the number of elements for the filtering convolution and possible order of accuracy.  Assume the (uniform) mesh size is $h$.   The filter scaling equals to the (uniform) mesh size ($H=h)$.  Here, the compress parameter $\epsilon = 0.5, \frac{1}{2k}$.}
\end{center}
\end{table}

Here, we discuss options for choosing the compress parameter, $\epsilon$,  in~\eqref{eq:compact_nodes}, which is sensitive to scale.  If $\epsilon$ is close to $1$, the support size is still quite large.  However, if $\epsilon$ is too small, the linear system~\eqref{eq:reprod} used to solve the coefficients $c_\gamma^{(2k+1,k+1)}$ of the filter will close to singular.  Then, the coefficients $c_\gamma^{(2k+1,k+1)}$ are contaminated by floating-point round-off error.  The best $\epsilon$ to use for a compact filter can be optimized to balance these two quantifiable trade-offs.  In this paper, after some numerical testing, we choose the $\epsilon = \frac{1}{2k}$.  Then, the support size becomes $k+2$, which is slightly larger than the optimal size of $k+1$ while avoiding being affected too much by the round-off problem.  That is, we limit the central point of the basis functions into interval $\left[-\frac{1}{2}, \frac{1}{2}\right]$,  
\[
    \epsilon x_\gamma \in \left\{-\frac{1}{2}, \ldots, \frac{1}{2}\right\}.
\]
Also, we note that the compact filter's support is independent of the number of basis functions.    
 
We note the way of the distribution of the basis functions has no effect on Property~\ref{prop-divided} and Property~\ref{prop:reprod}, the essential component to prove the superconvergence property.  For the filtered solutions, we can easily prove that the superconvergence of order $2k+1$ can be achieved.
\begin{thm}
    \label{thm:compact}
    Under the same conditions in Theorem~\ref{thm-1}, the compact filter defined in~\eqref{eq:compact_filter} with $\epsilon > 0$, then
    \[ 
        \|u - K_{\epsilon, h}^{(2k+1,k)}\star u_h\|_{0,\Omega_0} \leq C h^{2k+1}.
    \]
\end{thm}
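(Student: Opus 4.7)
The plan is to re-use the proof of Theorem~\ref{thm:general} essentially verbatim, showing that the only places where the particular choice of centers $x_\gamma = -k + \gamma$ entered are Property~\ref{prop-divided} and Property~\ref{prop:reprod}, and that both properties survive intact when the centers are compressed to $\epsilon x_\gamma$. Once this is verified, the triangle-inequality decomposition
$$\|u - K^{(2k+1,k+1)}_{\epsilon,h} \star u_h\|_{0,\Omega_0} \leq \|u - K^{(2k+1,k+1)}_{\epsilon,h} \star u\|_{0,\Omega_0} + \|K^{(2k+1,k+1)}_{\epsilon,h} \star (u - u_h)\|_{0,\Omega_0}$$
reduces the task to exactly the two bounds established in the sketch following Theorem~\ref{thm:general}.

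First, I would verify that polynomial reproduction up to degree $2k$ still holds. The compact filter is a linear combination of $2k+1$ translates of the central B-spline $\psi^{(k+1)}$; since $\epsilon > 0$, the shift values $\{\epsilon x_\gamma\}_{\gamma=0}^{2k}$ are pairwise distinct, each translated B-spline has compact support, and each integrates to $1$ (translation preserves the integral). Hence the hypotheses of Theorem~\ref{thm:coeff} are met, which guarantees unique coefficients $c_\gamma^{(2k+1,k+1)}$ making the compact filter satisfy $K^{(2k+1,k+1)}_\epsilon \star p = p$ for every polynomial $p$ of degree at most $2k$.

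Second, I would re-derive the divided-difference identity of Property~\ref{prop-divided} for the compact filter. Because differentiation commutes with translation and Property~\ref{prop-centralspline} is a pointwise identity for the central B-spline, we have $D^\alpha \psi_h^{(k+1)}(\,\cdot - \epsilon h x_\gamma) = \partial_h^\alpha \psi_h^{(k+1-\alpha)}(\,\cdot - \epsilon h x_\gamma)$ for every $\gamma$, so summing against the coefficients yields
$$D^\alpha \bigl(K^{(2k+1,k+1)}_{\epsilon,h} \star u_h\bigr) = \tilde{K}^{(2k+1,k+1-\alpha,\alpha)}_{\epsilon,h} \star \partial_h^\alpha u_h,$$
where $\tilde{K}_{\epsilon,h}$ is built from $\psi^{(k+1-\alpha)}$ centered at the same compressed nodes. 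With these two ingredients in hand, the filter-construction term is bounded by a Taylor expansion of $u$ combined with polynomial reproduction, and the approximation-error term is bounded via Lemma~\ref{lmm-bramble}, the identity above, and the negative-norm superconvergence of Theorem~\ref{thm-0}, exactly as in the proof of Theorem~\ref{thm:general}.

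The main obstacle is not really an obstruction in the logic but rather in tracking the constant: the coefficients $c_\gamma^{(2k+1,k+1)}$ solve the linear system~\eqref{eq:reprod} with $\phi_\gamma = \psi^{(k+1)}(\,\cdot - \epsilon x_\gamma)$, and as $\epsilon \to 0$ the translates coalesce and this system becomes ill-conditioned, so the constant $C$ in the theorem depends on $\epsilon$ and blows up as $\epsilon \to 0$. For any fixed $\epsilon > 0$ this constant is finite and the stated estimate follows; making the $\epsilon$-dependence of $C$ explicit would be the natural quantitative refinement, consistent with the empirical choice $\epsilon = 1/(2k)$ discussed in the paragraphs preceding the theorem.
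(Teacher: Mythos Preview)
Your proposal is correct and follows exactly the route the paper takes: the paper's entire proof is the single sentence ``The proof is analogous to Theorem~\ref{thm:general},'' and you have simply spelled out that analogy by checking that Properties~\ref{prop-divided} and~\ref{prop:reprod} are insensitive to the choice of centers $\epsilon x_\gamma$. Your additional remark about the $\epsilon$-dependence of the constant $C$ is a welcome refinement that the paper only alludes to informally in the surrounding discussion.
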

\begin{proof}
    The proof is analogous to Theorem~\ref{thm:general}.
\end{proof}

\begin{rem}
In this section, we only present the discussion on uniformly distribute $x_\gamma$ (or say $\epsilon x_\gamma$).  From the theoretical point of view (Theorem~\ref{thm:compact}), there is no need to limit $x_\gamma$ to be a uniform distribution.  Besides the uniform distribution, one can choose $x_\gamma$ as the Gauss points, Chebyshev points, etc.  However, we note that there is no noticeable benefits to use non-uniform distributions respect to superconvergence extracting, the support size, the accuracy level of the filtered solutions.  Therefore, we skip the discussion of using those non-uniform distributions.   
\end{rem}

\subsection{Numerical Results}
In this section, we provide numerical results to show the superconvergence of the compact SIAC filter in practice and compare it with the original SIAC filtering.  For all the numerical examples, we choose the compress parameter $\epsilon = \frac{1}{2k}$.

\subsubsection{Symmetric Filtering}
For the first set of examples, we consider the same setting of the advection equation~\eqref{eq:advection} presented in the previous section.

The $L^2$ norm errors and respective accuracy orders are given in Table~\ref{table:compact_filtering}.  Compared to the DG solutions with the regular accuracy order of $k+1$, the filtered solutions (with both the original and compact SIAC filtering) have the superconvergence order of $2k+1$.  Furthermore, we observe that compared to the original SIAC filter, the compact filter has a much smaller support size while shows noticeable better accuracy for higher-order cases.  For example, in the $k=3$ case, the compact SIAC filter has only half the original SIAC filter's support size, while the filtered errors are more than $10$ times smaller.  In Figure~\ref{figure:compact_filtering}, we show the point-wise error plots before and after applying the original and compact SIAC filtering.  We note that both filters recover smoothness in the DG solution as well as reduce the error.   

\begin{table}\centering 
\scalebox{1.}{
\begin{tabular}{ccccccccccc}
\toprule
  & & & \multicolumn{2}{c}{DG} &&  \multicolumn{2}{c}{Original SIAC} && \multicolumn{2}{c}{Compact SIAC} \\ 
\cmidrule{4-5} \cmidrule{7-8} \cmidrule{10-11} 
Degree & Elements & & Error & Order && Error & Order && Error & Order \\ 
\midrule
        & 20 && 4.60e-03 &   --  &&  1.97e-03 &   --  && 1.94E-03 &   --\\ 
$k = 1$ & 40 && 1.09e-03 &  2.08 &&  2.44e-04 &  3.02 && 2.42E-04 &  3.00 \\ 
        & 80 && 2.67e-04 &  2.02 &&  3.02e-05 &  3.01 && 3.01E-05 &  3.00 \\ 
        &160 && 6.65e-05 &  2.01 &&  3.76e-06 &  3.01 && 3.76E-06 &  3.00 \\ 
\midrule
        & 20 && 1.07e-04 &   --  &&  4.10e-06 &   --  && 2.27E-06 &   --\\ 
$k = 2$ & 40 && 1.34e-05 &  3.00 &&  9.42e-08 &  5.44 && 6.57E-08 &  5.11\\ 
        & 80 && 1.67e-06 &  3.00 &&  2.40e-09 &  5.30 && 2.03E-09 &  5.02 \\ 
        &160 && 2.09e-07 &  3.00 &&  6.63e-11 &  5.18 && 7.03E-11 &  4.85 \\ 
\midrule
        & 20 && 2.06e-06 &   --  &&  6.97e-08 &   --  && 5.26E-09 &   -- \\ 
$k = 3$ & 40 && 1.29e-07 &  4.00 &&  2.83e-10 &  7.95 && 2.44E-11 &  7.75\\ 
        & 80 && 8.07e-09 &  4.00 &&  1.14e-12 &  7.95 && 1.25E-13 &  7.61\\ 
        &160 && 5.04e-10 &  4.00 &&  4.67e-15 & 7.93 && 8.01E-16 &  7.29\\ 
\bottomrule
\end{tabular}}
\caption{Convergence tests for advection equation~\eqref{eq:advection} for the DG method with the filtering techniques.  Here, the final time $T = 1$.  We observe that the $L^2$ norm error of the DG solution has the accuracy order of $k+1$, and the solutions after filtering with the regular SIAC  filtering and the compact SIAC filtering have the same superconvergence order of $2k+1$.}
\label{table:compact_filtering}
\end{table}

\begin{figure}[!ht]
    \centering
    \begin{minipage}[c]{0.32\textwidth}
        \centerline{DG}
    \end{minipage}
    \begin{minipage}[c]{0.32\textwidth}
        \centerline{$\quad$Original SIAC}
    \end{minipage}
    \begin{minipage}[c]{0.32\textwidth}
        \centerline{$\qquad$Compact SIAC}
    \end{minipage}

    \begin{minipage}[c]{0.32\textwidth}
        \includegraphics[width=1.\textwidth]{figures/linear_dg_p2.pdf}
    \end{minipage}
    \begin{minipage}[c]{0.32\textwidth}
        \includegraphics[width=1.\textwidth]{figures/linear_siac_p2.pdf}
    \end{minipage}
    \begin{minipage}[c]{0.32\textwidth}
        \includegraphics[width=1.\textwidth]{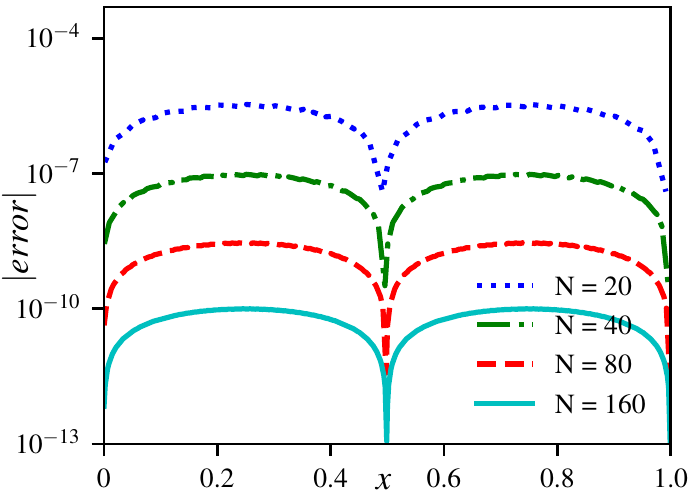}
    \end{minipage}
    \begin{center}
    \caption{\label{figure:compact_filtering}
        The point-wise error plots for advection equation~\eqref{eq:advection} for the DG method with the filtering techniques with polynomial $\mathbb{P}^2$.  Here, the final time $T = 1$.  We observe that the compared to the DG solution, the solutions after filtering with both the original and compact SIAC filtering have recovered the smoothness in the approximation and reduced the error.
}
     \end{center}
\end{figure}

\subsubsection{Non-symmetric Filtering} 
We can see the compact filter works at least as good as the original filter, the same accuracy order, the same smoothness, and better accuracy through the previous example.  Since the compact filter has significantly reduced support size, it can also help solve many issues caused by the filter's large support size, such as dealing with boundary problems.

Throughout the literature of the SIAC filtering, there are always some difficulties in dealing with non-periodic boundaries.  Due to the symmetric feature of the SIAC filter, one can not apply it directly near the boundary regions without periodic boundary conditions.   To overcome this issue, near the boundary regions, one has to shift the SIAC filter according to the position of point needs to be filtered -- the position-dependent SIAC filters~\cite{Li:2015, vanSlingerland:2011}.  However, the position-dependent filters usually have worse numerical performance than the symmetric SIAC filter.  Therefore, one should try not to use them unless necessary.   The compact filter with a smaller support size will be more suitable than the original SIAC filter in this situation.  

In the second numerical example, we consider the same advection equation~\eqref{eq:advection}.  Unlike the previous example, we have used the symmetric SIAC filtering with periodic boundary conditions.  In this example, we use the position-dependent SIAC filtering to deal with the boundary regions and use the symmetric SIAC filtering for the interior regions.  The $L^2$ norm errors and respective accuracy orders are given in Table~\ref{table:onesided_filtering}.  Like the previous example, both the original SIAC and compact filtered solutions have a superconvergence order of $2k+1$.  However, we note that the compact SIAC filtering provides much better accuracy for the $L^2$ norm error.  Also, we show the point-wise error plots before and after applying the original and compact SIAC filtering in Figure~\ref{figure:onesided_filtering}.  From Figure~\ref{figure:onesided_filtering}, We can see that when using the compact filter, the boundary regions that need use the position-dependent filter are much smaller than the original filter case ($\frac{k+2}{2}$ elements vs. $\frac{3k+1}{2}$ elements).

\begin{table}\centering 
\scalebox{1.}{
\begin{tabular}{ccccccccccc}
\toprule
  & & & \multicolumn{2}{c}{DG} &&  \multicolumn{2}{c}{original SIAC} && \multicolumn{2}{c}{compact SIAC} \\ 
\cmidrule{4-5} \cmidrule{7-8} \cmidrule{10-11} 
Degree & Elements & & Error & Order && Error & Order && Error & Order \\ 
\midrule
        & 20 && 4.60e-03 &   --  &&  3.67E-03 &   --  && 2.33E-03 &   --\\ 
$k = 1$ & 40 && 1.09e-03 &  2.08 &&  3.95E-04 &  3.21 && 2.72E-04 &  3.10 \\ 
        & 80 && 2.67e-04 &  2.02 &&  4.15E-05 &  3.25 && 3.22E-05 &  3.08 \\ 
        &160 && 6.65e-05 &  2.01 &&  4.54E-06 &  3.19 && 3.89E-06 &  3.05 \\ 
\midrule
        & 20 && 1.07e-04 &   --  &&  4.84E-04 &   --  && 2.64E-05 &   --\\ 
$k = 2$ & 40 && 1.34e-05 &  3.00 &&  1.69E-05 &  4.84 && 6.85E-07 &  5.27\\ 
        & 80 && 1.67e-06 &  3.00 &&  4.10E-07 &  5.36 && 1.58E-08 &  5.44 \\ 
        &160 && 2.09e-07 &  3.00 &&  9.28E-09 &  5.47 && 3.56E-10 &  5.47 \\ 
\midrule
        & 20 && 2.06e-06 &   --  &&  3.46E-05 &   --  && 3.57E-07 &   -- \\ 
$k = 3$ & 40 && 1.29e-07 &  4.00 &&  9.42E-07 &  5.20 && 2.59E-09 &  7.11\\ 
        & 80 && 8.07e-09 &  4.00 &&  6.47E-09 &  7.19 && 1.52E-11 &  7.41\\ 
        &160 && 5.04e-10 &  4.00 &&  3.76E-11 &  7.43 && 8.55E-14 &  7.48\\ 
\bottomrule
\end{tabular}}
\caption{Convergence tests for advection equation~\eqref{eq:advection} for the DG method with the position-dependent filtering techniques.  Here, the final time $T = 1$.  We observe that the $L^2$ norm error of the DG solution has the accuracy order of $k+1$, and the solutions after filtering with the original SIAC filtering and the compact SIAC filtering have the same superconvergence order of $2k+1$.  However, we note that the compact SIAC filtering provides much better accuracy for the error.}
\label{table:onesided_filtering}
\end{table}

\begin{figure}[!ht]
    \centering
    \begin{minipage}[c]{0.32\textwidth}
        \centerline{DG}
    \end{minipage}
    \begin{minipage}[c]{0.32\textwidth}
        \centerline{$\quad$Original SIAC}
    \end{minipage}
    \begin{minipage}[c]{0.32\textwidth}
        \centerline{$\qquad$Compact SIAC}
    \end{minipage}

    \begin{minipage}[c]{0.32\textwidth}
        \includegraphics[width=1.\textwidth]{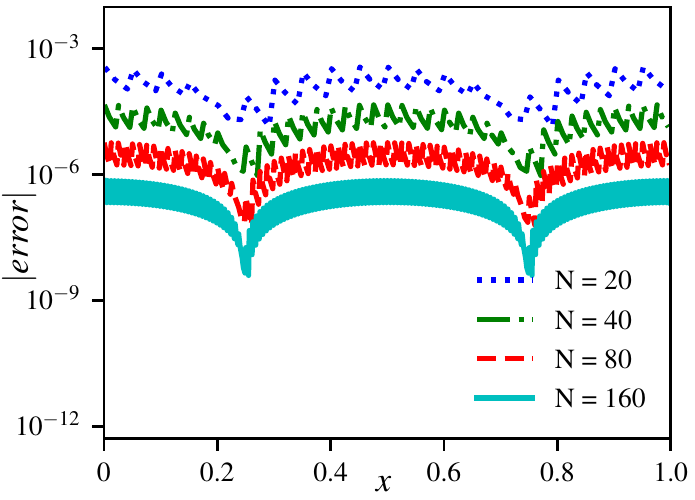}
    \end{minipage}
    \begin{minipage}[c]{0.32\textwidth}
        \includegraphics[width=1.\textwidth]{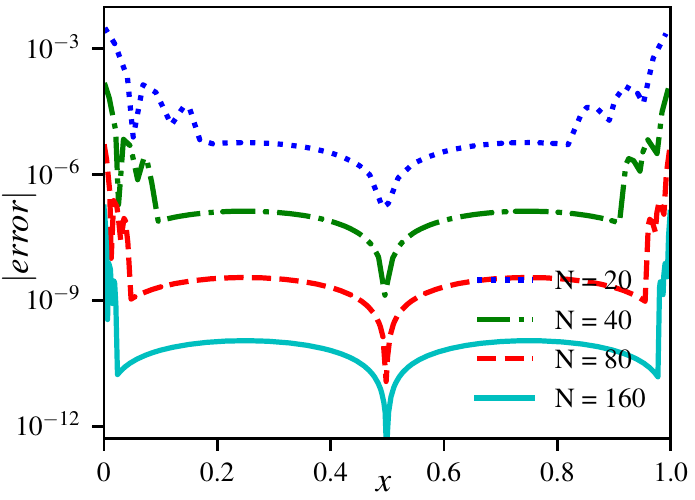}
    \end{minipage}
    \begin{minipage}[c]{0.32\textwidth}
        \includegraphics[width=1.\textwidth]{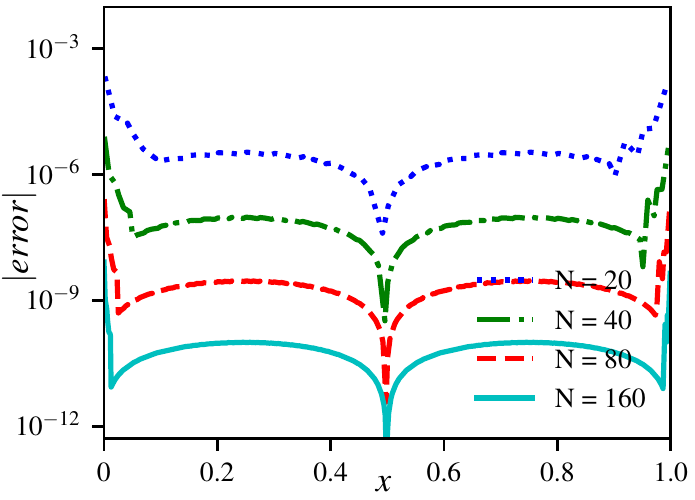}
    \end{minipage}
    
    \begin{minipage}[c]{0.32\textwidth}
        \includegraphics[width=1.\textwidth]{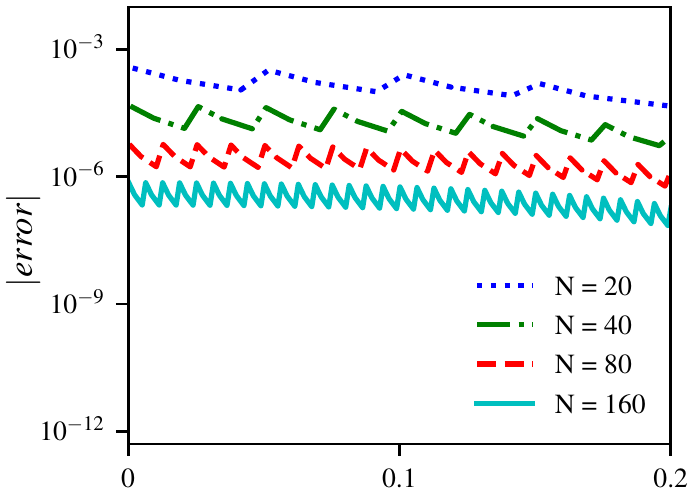}
    \end{minipage}
    \begin{minipage}[c]{0.32\textwidth}
        \includegraphics[width=1.\textwidth]{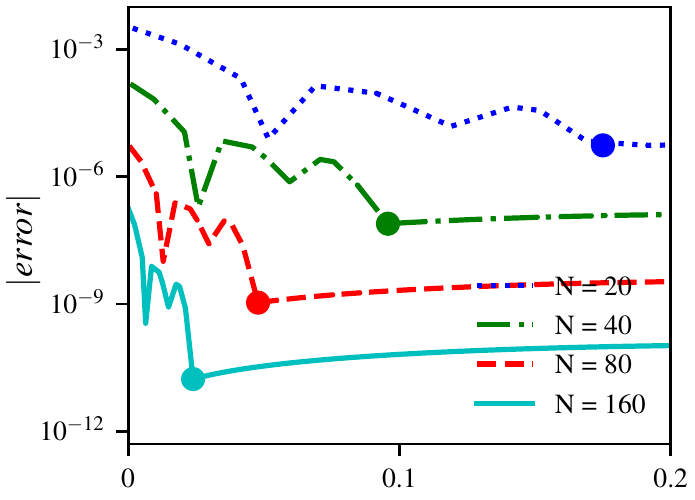}
    \end{minipage}
    \begin{minipage}[c]{0.32\textwidth}
        \includegraphics[width=1.\textwidth]{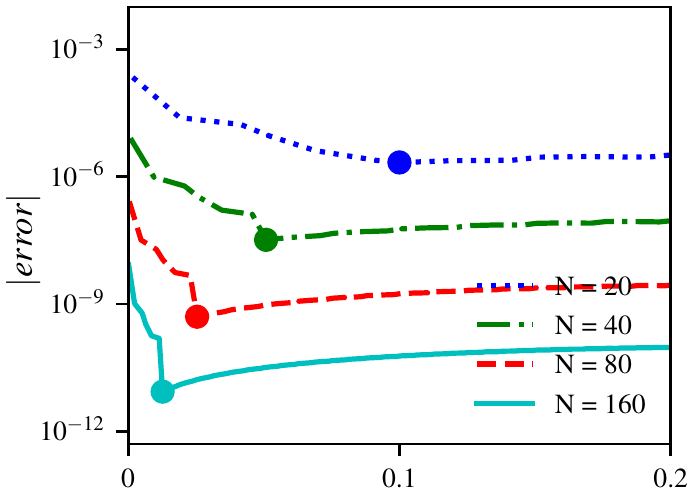}
    \end{minipage}
    \begin{center}
    \caption{\label{figure:onesided_filtering}
        The point-wise error plots for advection equation~\eqref{eq:advection} for the DG method with the position-dependent filtering with polynomial $\mathbb{P}^2$.  
        The bottom row presents zoomed plots for the left boundary region, and we mark the interface for the position-dependent filtering and the symmetric filtering by a big dot.  When using the compact filter, the boundary regions need to use the position-dependent filtering are smaller than using the original filter.
}
     \end{center}
\end{figure}

\subsubsection{The 2D Filtering}
\label{sec:2d}
At last, we show an application of the filter for a two-dimensional advection equation:
\begin{equation}
\label{eq:advection2d}
    \begin{split}
        u_t + u_x + u_y&= 0, \quad (x, y)\in[0,2\pi]\times[0, 2\pi],\ t \geq 0 \\
        u(x, y, 0) &= \sin(x + y).
    \end{split}
\end{equation}
We consider uniform rectangular meshes and apply the 2D SIAC filters (tensor-product of 1D filter) for the DG solutions.  First, we compare the support size of the original SIAC filter and the compact SIAC filter in Figure~\ref{figure:support_filter2d}.  One can observe that the difference of the support size is obvious, especially for the higher-order case (see, $k=2,3$).  In Table~\ref{table:filtered2d}, we compare the $L^2$ norm errors and convergence rates before and after applying the SIAC filter to the solution. We can see that the filter raises the order from $k+1$ to $2k+1$.  Further, the accuracy of the solution is largely improved. This is emphasized in Figure~\ref{fig:filtered2d}, where we provide the contour error plots for the particular case of $k=3$ and $80\times 80$ elements. Notice how the filter also recovers smoothness in error.

\begin{figure}[!ht]
    \centering
    \begin{minipage}[c]{0.48\textwidth}
        \centerline{$\quad$Original SIAC}
    \end{minipage}
    \begin{minipage}[c]{0.48\textwidth}
        \centerline{$\qquad$Compact SIAC}
    \end{minipage}

    \begin{minipage}[c]{0.48\textwidth}
        \centering
        \includegraphics[width=0.8\textwidth]{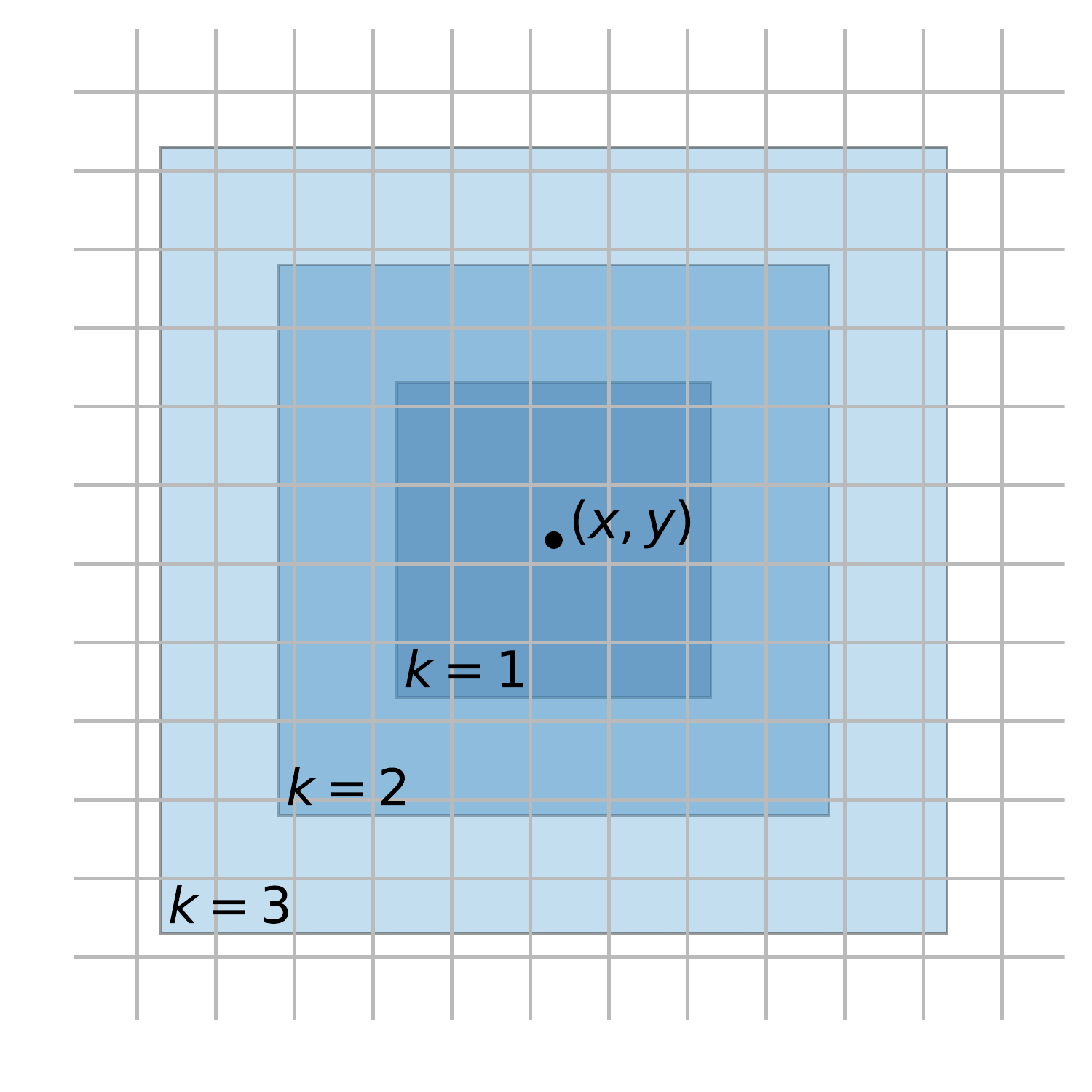}
    \end{minipage}
    \begin{minipage}[c]{0.48\textwidth}
        \centering
        \includegraphics[width=0.8\textwidth]{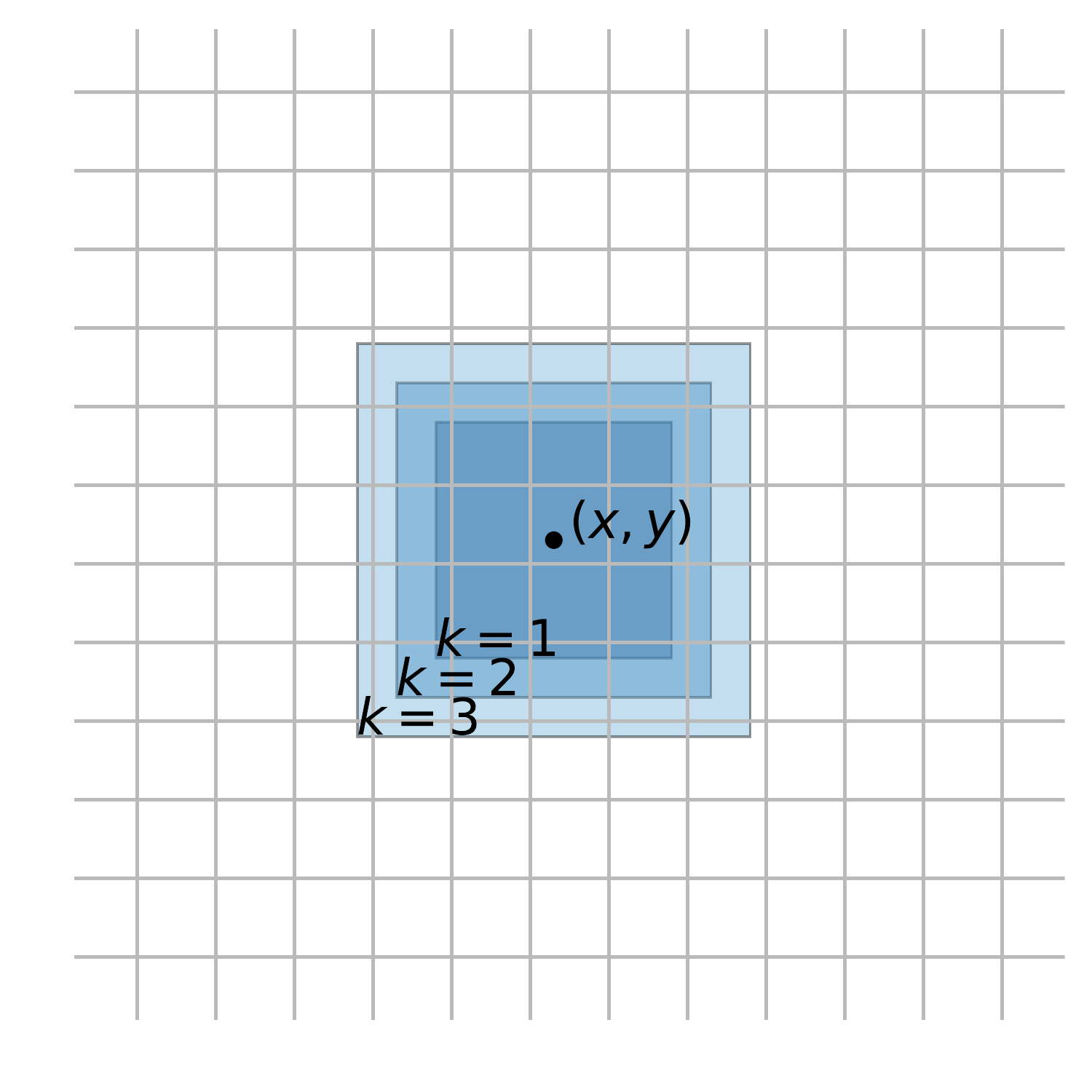}
    \end{minipage}
    \begin{center}
    \caption{\label{figure:support_filter2d}
        The comparison of the support regions for the 2D filtering convolution for the point $(x,y)$ with the original (left) and the compact (right) SIAC filtering, for $k = 1, 2, 3$ case.  One can see that the compact SIAC filtering involves much smaller convolution region, especially for higher-order situations.
}
     \end{center}
\end{figure}

\begin{table}[!ht]
\centering 
\begin{tabular}{@{}ccccccccccc@{}}\toprule
  & & & \multicolumn{2}{c}{DG solution} && \multicolumn{2}{c}{Original SIAC} && \multicolumn{2}{c}{Compact SIAC} \\ 
\cmidrule{4-5} \cmidrule{7-8} \cmidrule{10-11}
Degree & Elements & & Error & Order && Error & Order && Error & Order \\ 
\midrule
        & $10\times10$ && 3.71e-02 & --   && 3.20e-02 &  --   && 3.09e-02 &  -- \\
$k = 1$ & $20\times20$ && 7.07e-03 & 2.39 && 4.02e-03 & 2.99  && 3.95e-03 & 2.97\\ 
        & $40\times40$ && 1.57e-03 & 2.17 && 4.97e-04 & 3.01  && 4.93e-04 & 3.00\\ 
        & $80\times80$ && 3.80e-04 & 2.05 && 6.17e-05 & 3.01  && 6.15e-05 & 3.00\\ 
\midrule
        & $10\times10$ && 1.21e-03 &  --  && 3.88e-04 &  --   && 1.67e-04 &  -- \\ 
$k = 2$ & $20\times20$ && 1.51e-04 & 3.00 && 8.30e-06 & 5.55  && 4.62e-06 & 5.18\\ 
        & $40\times40$ && 1.89e-05 & 3.00 && 2.00e-07 & 5.38  && 1.41e-07 & 5.03\\ 
        & $80\times80$ && 2.36e-06 & 3.00 && 6.23e-09 & 5.00  && 5.31e-09 & 4.73\\ 
\midrule
        & $10\times10$ && 4.65e-05 &  --  && 3.25e-05 &  --   && 2.35e-06 &  -- \\
$k = 3$ & $20\times20$ && 2.92e-06 & 3.99 && 1.40e-07 & 7.86  && 1.05e-08 & 7.81\\ 
        & $40\times40$ && 1.83e-07 & 4.00 && 5.76e-10 & 7.92  && 5.99e-11 & 7.45\\ 
        & $80\times80$ && 1.14e-08 & 4.00 && 3.71e-12 & 7.28  && 3.39e-13 & 7.47\\ 
\bottomrule
\end{tabular}
\caption{\label{table:filtered2d}Convergence tests for a 2D advection equation~\eqref{eq:advection2d} for the DG solutions, the filtered solutions with the original, and the compact SIAC filtering.  Here, the final time $T = 2\pi$.}
\end{table}

\begin{figure}[!ht]
    \centering
    \begin{minipage}[c]{0.32\textwidth}
        \centerline{DG}
    \end{minipage}
    \begin{minipage}[c]{0.32\textwidth}
        \centerline{$\,\,$Original SIAC}
    \end{minipage}
    \begin{minipage}[c]{0.32\textwidth}
        \centerline{$\,\,$Compact SIAC}
    \end{minipage}

    \begin{minipage}[c]{0.32\textwidth}
        \includegraphics[width=1.\textwidth]{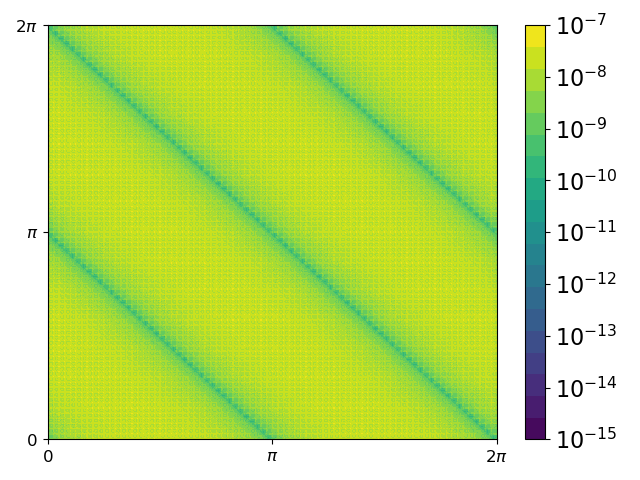}
    \end{minipage}
    \begin{minipage}[c]{0.32\textwidth}
        \includegraphics[width=1.\textwidth]{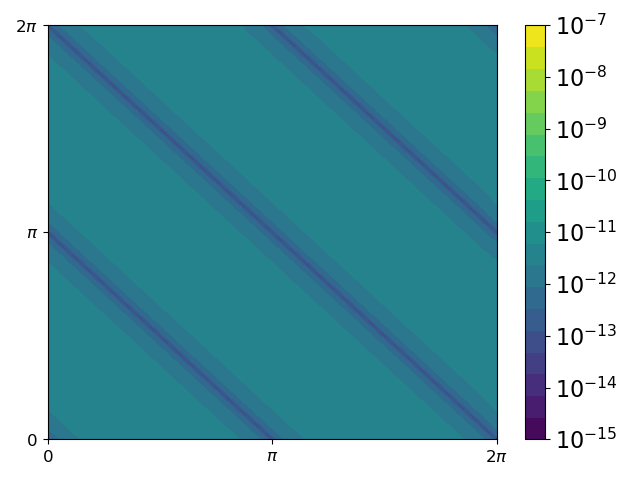}
    \end{minipage}
    \begin{minipage}[c]{0.32\textwidth}
        \includegraphics[width=1.\textwidth]{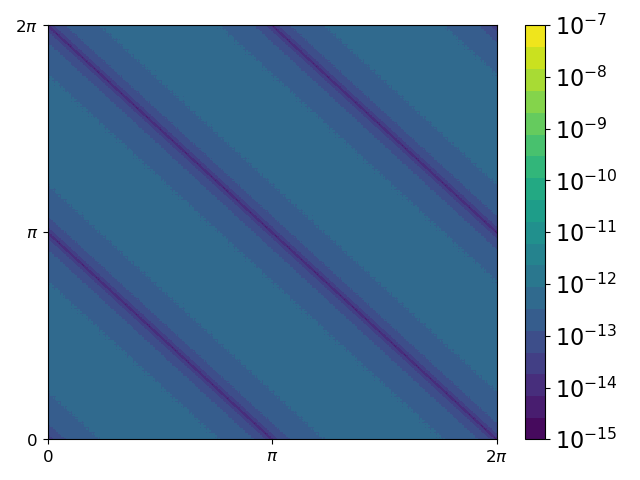}
    \end{minipage}
       
    \caption{\label{fig:filtered2d}
      Contour error plots of a 2D advection equation~\eqref{eq:advection2d} for the DG solution, and the filtered solutions with the original and the compact SIAC filtering for $k=3$ over a uniform $80\times80$ rectangular mesh.}
\end{figure}

\section{Conclusion}
\label{sec:conclusion}
This article has discussed two essential components of constructing a generic accuracy-enhancing filter for the DG method: the choice of basis functions and the distribution of basis functions.  We first used a Fourier transformation to design a general framework to construct a series of basis functions for the SIAC filter with arbitrary initial basis function.  The introduction of this general construction idea will pave the way for the design of new SIAC filters with proven superconvergence properties.   An interesting future research direction is investigating the utility of the construction idea for DG solutions with more complicated features (e.g., DG solutions with shocks or discontinuities, on polygon meshes).   Secondly, by investigating the distribution of the basis functions of the SIAC filter, we propose the idea of the compact SIAC filter.  The compact SIAC filter has dramatically reduced the support size of the original SIAC filter.  We also demonstrate the extension of this idea to other variations of the SIAC filters (e.g., position-dependent SIAC filters).  We also proved that the superconvergent extraction capabilities of the
SIAC filters are unaffected.   We also present numerical results to confirm the theoretical conclusion and demonstrate the better numerical performance of the findings.

\section*{Acknowledgements}
The author was partially supported by the National Natural Science Foundation of China (NSFC) under grants No. 11801062.

\bibliography{ref}
\bibliographystyle{plain}
\end{document}